\newcommand{\Z}{\mathbb{Z}}
\newcommand{\N}{\mathbb{N}}
\newcommand{\R}{\mathbb{R}}
\renewcommand{\mod}{\textup{mod}}
\newcommand{\frbd}[1]{\frac{\partial}{\partial #1}}
\newtheorem{proposition}{Proposition}
\newtheorem{theorem}[proposition]{Theorem}
\newtheorem{definition}[proposition]{Definition}
\newtheorem{corollary}[proposition]{Corollary}
\newtheorem{remark}[proposition]{Remark}
\newtheorem{example}[proposition]{Example}
\title{A $b$-symplectic slice theorem}
\author{Roisin Braddell}\address{ Roisin Braddell,
Laboratory of Geometry and Dynamical Systems, Department of Mathematics, Universitat Polit\`{e}cnica de Catalunya and BGSMath, Barcelona  \it{e-mail: roisin.braddell@upc.edu }
}
\author{Anna Kiesenhofer}\address{ Anna Kiesenhofer,
Department of Mathematics, EPFL, Lausanne  \it{e-mail:  anna.kiesenhofer@epfl.ch}
 }
\author{Eva Miranda}\address{ Eva Miranda,
Laboratory of Geometry and Dynamical Systems, Departament of Mathematics, Universitat Polit\`{e}cnica de Catalunya BGSMath, Barcelona and Centre de Recerca Matemàtica
 \it{e-mail: eva.miranda@upc.edu}
}
 \thanks{ Eva Miranda  is supported by the Catalan institution for Research and Advanced Studies via an ICREA Academia Prize 2016 and an ICREA Academia Prize 2021 and by the Spanish State
Research Agency, through the Severo Ochoa and Mar\'{\i}a de Maeztu Program for Centers and Units
of Excellence in R\&D (project CEX2020-001084-M) and through the project PID2019-103849GB-I00 / AEI / 10.13039/501100011033. Roisin Braddell is supported by the Severo Ochoa Program SEV-2017-0718, Basque Government BERC Program 2018-2021 and was supported by a predoctoral grant from UPC with the ICREA Academia project of Eva Miranda.   We acknowledge support from the Fondation Sciences Mathématiques de Paris via the Chaire d'Excellence of Eva Miranda supported by a public grant overseen by the French National Research Agency (ANR) as part of the "Investissements d'Avenir" program (reference: ANR-10-LABX-0098) to finance a research stay of Roisin Braddell and a research visit of  Anna Kiesenhofer in Paris to start this project.}
\date{\today}
\begin{document}

\begin{abstract} In this article, motivated by the study of symplectic structures on manifolds with boundary and the systematic study of $b$-symplectic manifolds started in \cite{guillemin2014symplectic}, we prove a slice theorem for Lie group actions on $b$-symplectic manifolds.
\end{abstract}

\maketitle

\section{Introduction}

The linearization of Lie group actions for compact groups in a neighbourhood of a fixed point is due to Bochner \cite{bochner}. It gives a precise description of the local normal form for a Lie group action in the neighbourhood of a fixed point. The next level of difficulty in describing group actions is in the neighbourhood of an invariant submanifold for the action: It was not until the work of Palais in the 60's that such a portrayal was achieved \cite{palais1, palais2}.
The description of the normal form in this case is semilocal (in a neighbourhood of an orbit) and it is done in terms of the action of the group along the orbit and on the \emph{linearization} of its (orthogonal) complement, which Palais denominated "slice".

{ The existence of slices re-conducts the computation of the orbits for the action in terms of its normal space on which it acts linearly. When additional geometrical structures are added into the picture, the existence of normal forms gains interest as it can often be adapted to the new ingredient (the geometric structure). This is the case of symplectic structures where Lie group actions are naturally related to the investigation of Hamiltonian symmetries.}

In particular, symplectic slice theorems, give equivariant normal forms around orbits of symplectic group actions { and become particularly handy when computing the orbits of fundamental vector fields of Hamiltonian actions}. For example the Marle-Guillemin-Sternberg normal form \cite{guillemin1990symplectic}, \cite{marle1985modele} or its generalisations, have been used extensively to study the local structure of symplectic manifolds with symmetries.

The purpose of this article is to extend these results to the singular set-up, more precisely to $b$-symplectic manifolds. {In this singular framework the motivation to find equivariant normal forms comes from the study of symmetric manifolds with boundary}. These singular symplectic structures have been intensely studied since their introduction in  \cite{guillemin2014symplectic}. A study of their geometry in the presence of symmetries was initiated in \cite{guillemin2015toric} (see also \cite{gualtieri2017tropical}) which gave global results on the structure of $b$-symplectic manifolds with toric symmetries and also semilocal models.

In \cite{kiesenhofer2017cotangent} two of the authors in this paper described integrable systems as $b$-cotangent lifts of rotations on a Liouville torus to its cotangent bundle. These can be understood as semilocal models for free actions of tori (associated to integrable systems). Motivated by these models in the integrable case, we plan to give linearized models for general actions of Lie groups in the language of the symplectic slice theorem. Many examples motivating the study of more general symmetries comes from the study of non-commutative integrable systems on ($b$-)symplectic manifolds as considered in \cite{kiesenhofermiranda} and \cite{bolsinov}.

These cotangent lifts appear naturally on the study of geodesic flows:
A $P$-manifold is a Riemannian manifold $M$ with all the geodesics  closed. $2$-dimensional examples of $P$-manifolds  are Zoll and Tannery surfaces (see Chapter 4 in \cite{besse}). In this case the geodesics admit a common period (see Lemma 7.11 in \cite{besse}) inducing an  $S^1$-action on $M$. In the same way that the standard cotangent lift induces a Hamiltonian action on $T^*M$  we can use the twisted $b$-cotangent lift to obtain a $b$-Hamiltonian $S^1$-action on $T^\ast M$. In this case the action is given as a \emph{twisted $b$-cotangent lift} which is a "linear model" of the $b$-Poisson structure parametrized by an additional invariant: a constant $c$. This constant is the  \emph{modular period} of the structure.

The $b$-symplectic slice theorem gives a normal form for a $b$-symplectic form via the symplectic slice theorem, which we recall here (for details on the constructions see \cite{GS} for the Hamiltonian case or \cite{ortega2002symplectic} for the more general symplectic case):

\begin{theorem}\label{symplecticslicetheorem}
Let $(M, \omega)$ be a symplectic manifold and let $H$ be a Lie group acting properly and by symplectomorphisms on $M$. Let $z \in M$. Denote the isotropy group of $z$ by $H_z$ and the orbit of $m$ by $\mathcal{O}_z^H$. Let $V_z$ be the \textit{symplectic normal space}
$$V_{z}:=(T_{z}\mathcal{O}_z^H)^{\omega} /\left((T_{z}\mathcal{O}_z^H)^{\omega} \cap T_{z}\mathcal{O}_z^H \right).$$
Let $\mathfrak{h}$ be the Lie algebra of $H$ and consider the following subalgebra of $\mathfrak{h}$,
$$\mathfrak{k}:=\left\{
\eta \in \mathfrak{h}\, |\, \eta_M(z) \in (T_{z}\mathcal{O}_z^H)^{\omega}\right\}$$
where $\eta_M$ is the generating vector field of $\eta$.
Let $\mathfrak{i}$ be the Lie algebra of $H_z$ and note that $\mathfrak{i}\subset \mathfrak{k}$. Denote by $\mathfrak{m}$ an $Ad_{H_z}$-invariant complement of $\mathfrak{i}$ in $\mathfrak{k}$.
Then the twisted product
\begin{equation}
Y^H_{z}:=H \times_{H_z}\left(\mathfrak{m}^{*}\times V_{z} \right)
\end{equation}
is a symplectic $H$-space and can be chosen such that there is an $H$-invariant neighbourhood $\mathcal{U}$ of $z$ in $M$, an $H$-invariant neighbourhood $\mathcal{U}'$ of $[e, 0,0]$ in $Y^H_{z}$  and an equivariant symplectomorphism $\phi: \mathcal{U} \rightarrow \mathcal{U}'$ satisfying $\phi(z)=[e, 0,0]$. Equipping the bundle $Y^H_{z}$ with coordinates $[k, \eta, v]$ for $k \in H, \eta \in \mathfrak{m}^{*}$ and $v \in V_z$, $H$ acts on $Y^H_{z}$ as $h\cdot[k, \eta, v]=[h\cdot k, \eta, v]$.
\end{theorem}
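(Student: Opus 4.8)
The plan is to combine the smooth (Palais) slice theorem with an equivariant Moser argument, reducing the symplectic statement to matching two symplectic forms that already agree to first order along the orbit. Since the action is proper the isotropy group $H_z$ is compact, and this is precisely what lets us average to preserve $H$-equivariance throughout. The first step is the linear (\emph{Witt--Artin}) decomposition of $T_z M$ as an $H_z$-representation. Averaging over $H_z$ one fixes an $H_z$-invariant inner product and an $Ad_{H_z}$-invariant splitting $\mathfrak{h}=\mathfrak{i}\oplus\mathfrak{m}\oplus\mathfrak{q}$, where $\mathfrak{q}$ complements $\mathfrak{k}$. One checks that $\mathfrak{m}\cdot z = T_z\mathcal{O}_z^H\cap (T_z\mathcal{O}_z^H)^\omega$ is the kernel of $\omega_z$ restricted to $T_z\mathcal{O}_z^H$, while $\mathfrak{q}\cdot z$ maps isomorphically onto the symplectic quotient $T_z\mathcal{O}_z^H/(T_z\mathcal{O}_z^H\cap(T_z\mathcal{O}_z^H)^\omega)$. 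Using nondegeneracy of $\omega_z$ one produces an $H_z$-invariant isotropic subspace $W$ pairing $\mathfrak{m}\cdot z$ nondegenerately with $W\cong\mathfrak{m}^*$, and identifies $V_z$ with the remaining symplectic-orthogonal directions. A dimension count then shows that the linear slice $S$, an $H_z$-invariant complement of $T_z\mathcal{O}_z^H$ in $T_z M$, is isomorphic as an $H_z$-representation to $\mathfrak{m}^*\times V_z$.

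Next I would construct the model symplectic form on $Y^H_z$. The principal bundle $H\to H/H_z$, together with the identification of the $\mathfrak{m}^*$-directions above, lets one equip $H\times_{H_z}\mathfrak{m}^*$ with a canonical cotangent-type two-form: choose an $H$-invariant connection on $H\to H/H_z$ and realize the form via symplectic reduction of $(T^*H,\omega_{can})$ by the compact group $H_z$, whose cotangent-lifted action is automatically Hamiltonian. Coupling this with the fibrewise constant symplectic form on $V_z$ produces an $H$-invariant closed two-form $\omega_Y$ on a neighbourhood of the zero section $[e,0,0]$. Nondegeneracy near the zero section is inherited from the base point, where $\omega_Y$ reproduces $\omega_z$ under the Witt--Artin identification.

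Finally I would run the equivariant relative Moser trick. By the Palais slice theorem, properness yields an $H$-equivariant diffeomorphism from an $H$-invariant neighbourhood $\mathcal{U}$ of $z$ onto a neighbourhood of the zero section of $H\times_{H_z}S\cong Y^H_z$; pulling back $\omega$ gives an $H$-invariant symplectic form $\omega_0$ agreeing with $\omega_Y$ at $[e,0,0]$, and after an initial linear $H$-equivariant adjustment, agreeing to first order along the whole orbit. The interpolation $\omega_t=\omega_Y+t(\omega_0-\omega_Y)$ is then $H$-invariant, closed, and nondegenerate near the orbit. Choosing an $H$-invariant primitive $\alpha$ of $\omega_0-\omega_Y$ that vanishes along the orbit, one solves $\iota_{X_t}\omega_t=-\alpha$ for a time-dependent vector field $X_t$ and integrates its flow to the desired equivariant symplectomorphism $\phi$ fixing the orbit and carrying $\omega$ to $\omega_Y$.

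The main obstacle is the equivariance in this last step: producing a primitive $\alpha$ and an isotopy that are genuinely $H$-invariant, not merely $H_z$-invariant, while vanishing to the correct order along the orbit so that $\phi$ is defined on a full invariant neighbourhood. Compactness of $H_z$ handles averaging in the fibre via an $H_z$-equivariant homotopy operator, and the twisted-product structure $H\times_{H_z}(-)$ then propagates $H_z$-invariant fibre data to $H$-invariant data on $Y^H_z$; reconciling these two mechanisms so that Moser's equation admits an $H$-invariant solution is the crux of the argument.
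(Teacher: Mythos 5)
The paper does not actually prove Theorem \ref{symplecticslicetheorem}: it is recalled as a known result, with the proof deferred to \cite{GS} for the Hamiltonian case and to \cite{ortega2002symplectic} for the general symplectic case. Your outline --- the Witt--Artin decomposition of $T_zM$, the construction of the model form on $H\times_{H_z}(\mathfrak{m}^*\times V_z)$ via a connection and reduction, and an equivariant relative Moser argument --- is exactly the standard argument of those references (with the caveat that in the non-Hamiltonian setting the model form requires the Chu/cylinder-valued momentum data of \cite{ortega2002symplectic} rather than plain cotangent-bundle reduction), so it reconstructs the proof the paper relies on.
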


\begin{remark}\label{remark:MGS}

{  The symplectic form on the quotient bundle is called the MGS-symplectic form\footnote{MGS for Marle \cite{marle1985modele} and Guillemin-Sternberg \cite{GS}} and it is denoted by $\omega_{MGS}$. This symplectic form constructed as explained above admits an explicit expression in terms of the Lie algebra decomposition associated to the isotropy group (confer \cite{ortega2002symplectic}). A particularly important class of symplectic actions are Hamiltonian actions when the action comes associated with a moment map $\mu: M \to \mathfrak{h}^*$. In the Hamiltonian case, it is possible to describe the moment map for the group action as a splitting.} {   Namely, the moment map $\mu: M \to \mathfrak{h}^*$ may be written as $  \mu([g, \gamma, v]) = Ad^{*}_{g} (\mu(p) + \gamma + \phi(v)),
        $
        where $\phi: V \to \mathfrak{h}_x^*$ is the moment map for the slice representation.  }
        \end{remark}

In order to prove a $b$-symplectic analogue of Theorem \ref{symplecticslicetheorem}, we show that $b$-symplectic manifolds equipped with $b$-symplectic actions transverse to the symplectic foliation possess a finite cover which is a product. The slice theorem then reduces to a ``product slice theorem" modulo the action of a finite group. This linearized model has an additional invariant compared to the symplectic one: the modular period of the component of the critical hypersurface where the orbit lies. We will prove:

\begin{theorem}\label{bslicetheorem}
Let $(M, \omega, G)$ be a $b$-symplectic manifold together with an effective $b$-symplectic action by a compact connected Lie group $G$. Let $Z$ be the critical set of the $b$-symplectic form. Assume that $Z$ is compact and  connected  { and that there is one symplectic leaf of $Z$ which is compact}. Assume further that the orbits of $G$ are transverse to the symplectic foliation of $Z$ . Let $\mathcal{L}$ be a symplectic leaf of $Z$. Then
\begin{enumerate}[(i)]
\item $G$ is necessarily of the form $G=(S^1\times H)/\mathbb{Z}_k$ where
$H$ is a compact, connected Lie group.
\item The action of $G$ lifts to an action of the product group $\tilde{G}=S^1\times H$ on a finite cover $\tilde M$ of a collar neighbourhood of $Z$, ${\tilde M}:=(-\epsilon,\epsilon)\times \tilde{Z}$, $\tilde{Z}\cong S^1\times \mathcal{L}$, where $S^1$ acts on $\tilde{Z}$ by translations on the $S^1$-factor and $H$ by symplectomorphisms on the symplectic leaf $\mathcal{L}$.
\item Let $\tilde{z}\in\tilde{Z}$. Denote by $\mathcal{O}^{\tilde{G}}_{\tilde{z}}$ the orbit of $\tilde{z}$ in $\tilde{Z}$ under the action of $\tilde{G}$ and by $Y^H_{z}$ the bundle of Theorem \ref{symplecticslicetheorem} associated to the action of $H$ on $\mathcal{L}$.  Then there is an equivariant $b$-symplectomorphism from a neighbourhood of the orbit $\mathcal{O}^{\tilde{G}}_{\tilde{z}} \cong S^1\times \mathcal{O}^{H}_{\tilde{z}}$ to the zero section of the bundle $\tilde{E}=T^*S^1 \times Y^{H}_{z}$ where $\tilde{G}/H_{\tilde{z}}$ is embedded as the zero section and the $b$-symplectic form on $\tilde{E}$ is given by
\begin{equation*}
\tilde \omega_0=\omega_{c'}+\omega_{MGS}.
\end{equation*}
Here $\omega_{c'}$ is the standard $b$-symplectic form of modular period $c'=kc$ on the manifold $T^*S^1$, see Equation \eqref{twistedsymplectic}, $c$ is the modular period of $Z$ and $\omega_{MGS}$ is the MGS normal form as given by Theorem \ref{symplecticslicetheorem}.
\item Let $\mathcal{O}_z$ be the orbit of $z\in Z$ under the action of $G$. There is an equivariant $b$-symplectomorphism from a neighbourhood of $\mathcal{O}_z$ to a neighbourhood of the zero section of the bundle $E=(T^*S^1 \times Y^H_{z})/\mathbb{Z}_l$ where $\mathbb{Z}_l$ is a finite cyclic group acting by the cotangent lifted action on $T^*S^1$.
\end{enumerate}

\end{theorem}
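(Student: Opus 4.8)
The plan is to follow the strategy announced before the statement: first show that, up to a finite cover, both the group $G$ and a collar of $Z$ split as products (parts (i), (ii)); then prove the normal form on the product cover by combining Theorem \ref{symplecticslicetheorem} with a $b$-symplectic Moser argument (part (iii)); and finally descend through the finite quotient (part (iv)). For part (i), since $G$ acts by $b$-symplectomorphisms it preserves the canonically defined critical set $Z$ and its induced Poisson structure, hence the codimension-one symplectic foliation $\mathcal{F}$ of $Z$. As $Z$ is compact and connected, its leaf space is a circle, and $G$ acts on it through $\rho\colon G\to\mathrm{Diff}^{+}(S^1)$; compactness and connectedness of $G$ conjugate this to rotations, so $\rho(G)\subseteq \mathrm{SO}(2)$, and transversality forces $\rho$ to be surjective. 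This gives $1\to K\to G\to S^1\to 1$. On Lie algebras $d\rho\colon\mathfrak{g}\to\mathbb{R}$ is a nonzero functional vanishing on $[\mathfrak{g},\mathfrak{g}]$, so its complementary direction lies in the centre of the compact algebra $\mathfrak{g}$; writing $\mathfrak{g}=\mathfrak{h}\oplus\mathbb{R}\xi$ with $\xi$ central and $\mathfrak{h}=\ker d\rho$, the compact connected subgroup $H\subseteq K$ integrating $\mathfrak{h}$ together with the central circle generated by $\xi$ produces a surjection $\tilde G=S^1\times H\to G$ with finite cyclic kernel $\mathbb{Z}_k$.

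For part (ii) the crucial point is that $\xi$ is central, so its circle commutes with $H$. Its orbits in $Z$ are transverse to $\mathcal{F}$ and cover the leaf-space circle $k$-to-one, so the map $S^1\times\mathcal{L}\to Z$, $(s,x)\mapsto s\cdot x$, is a $k$-fold covering whose deck group is the cyclic stabiliser $\mathbb{Z}_k$ of $\mathcal{L}$ acting diagonally; this defines $\tilde Z\cong S^1\times\mathcal{L}$. Centrality yields $h\cdot(s\cdot x)=s\cdot(h\cdot x)$, so on $\tilde Z$ the lifted $\tilde G$-action is precisely $S^1$ translating the first factor and $H$ acting on $\mathcal{L}$ by symplectomorphisms (it preserves each leaf, being contained in $K$). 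Averaging over the compact group $G$ gives a $G$-invariant collar $(-\epsilon,\epsilon)\times Z$ on which the action is trivial in the normal direction, and pulling back the cover produces $\tilde M=(-\epsilon,\epsilon)\times\tilde Z$.

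For part (iii) I would first bring the $b$-symplectic form on $\tilde M$ to split form. Using the Guillemin--Miranda--Pires normal form near $Z$ together with the product structure $\tilde Z=S^1\times\mathcal{L}$, the closed defining one-form of $\mathcal{F}$ becomes a multiple of $d\phi$ on the $S^1$-factor and the leafwise form becomes $\omega_{\mathcal{L}}$; the normal coordinate together with $\phi$ assembles into $T^*S^1$ carrying the standard twisted form $\omega_{c'}$, with modular period multiplied by the covering degree so that $c'=kc$. Applying Theorem \ref{symplecticslicetheorem} to the $H$-action on $(\mathcal{L},\omega_{\mathcal{L}})$ identifies a neighbourhood of $\mathcal{O}^{H}_{\tilde z}$ with the zero section of $Y^H_{z}$ carrying $\omega_{MGS}$. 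A $\tilde G$-equivariant $b$-Moser argument, carried out compatibly with the splitting, then interpolates between the true $b$-symplectic form and $\tilde\omega_0=\omega_{c'}+\omega_{MGS}$, sending $\mathcal{O}^{\tilde G}_{\tilde z}\cong S^1\times\mathcal{O}^{H}_{\tilde z}$ to the zero section. This is the main obstacle: one must run the $b$-symplectic Darboux/Moser argument so that it stays $\tilde G$-equivariant, behaves correctly across the logarithmic singularity along the critical hypersurface, and is compatible with the MGS construction on the leaf, in particular verifying that the transverse modular data assemble exactly into $T^*S^1$ with period $c'=kc$ and that no cross terms between the $T^*S^1$ and $Y^H_{z}$ factors survive.

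Finally, for part (iv), all of the above is equivariant with respect to the finite deck group of the cover $\tilde M\to M$; writing $\mathbb{Z}_l$ for the part of $\mathbb{Z}_k$ acting effectively near the orbit, this group acts on $T^*S^1$ by the cotangent lift of a rotation, preserving $\omega_{c'}$, and trivially on $Y^H_{z}$. Hence the model descends to $E=(T^*S^1\times Y^H_{z})/\mathbb{Z}_l$, and the $b$-symplectomorphism constructed in (iii) induces the required equivariant $b$-symplectomorphism from a neighbourhood of $\mathcal{O}_z$ to a neighbourhood of the zero section of $E$.
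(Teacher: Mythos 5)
Your parts (i)--(iii) follow essentially the same route as the paper: the leaf-preserving subgroup $H_0=\ker\bigl(G\to \mathrm{Diff}^{+}(S^1)\bigr)$ is a closed normal codimension-one subgroup, a transverse (central) circle produces the finite cover $\tilde G=S^1\times H\to G$ and the trivializing cover $\tilde Z\cong S^1\times\mathcal{L}$ (this is Proposition \ref{prop:productaction}, which the paper obtains via Sadowski's theorem, Theorem \ref{sadowski}), and the normal form on the cover comes from the symplectic slice theorem on the leaf plus an equivariant Moser argument. The step you flag as ``the main obstacle'' is resolved in the paper by a \emph{relative} equivariant $b$-Moser lemma (Proposition \ref{th:equivariantrelativeMoser}): one only needs $\tilde\psi^{*}\tilde\omega_0$ and $\tilde\omega$ to agree at the single point $\tilde z$, hence along the orbit by invariance; the relative Poincar\'e lemma then gives invariant primitives vanishing on the orbit, the Moser vector field vanishes there, and the cross terms you worry about are absorbed by the isotopy rather than having to be removed beforehand. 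Up to this point your proposal is the paper's argument, organized slightly differently.

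Part (iv), however, contains a genuine error: you assert that the residual group $\mathbb{Z}_l$ acts \emph{trivially} on the factor $Y^{H}_{z}$. This is false in general, and the nontrivial action on $Y^{H}_{z}$ is exactly what distinguishes exceptional orbits from regular ones. The deck transformations of $\tilde Z\to Z$ act by $\mu_m(t,l)=(t-\frac{m}{k},\sigma_m(l))$ with $\sigma_m$ a generally nontrivial symplectomorphism of the leaf; the subgroup $\Gamma_z\cong\mathbb{Z}_l$ preserving $\mathcal{O}^{H}_{z}$ acts on $Y^{H}_{z}$ either through the isotropy representation on the symplectic slice $V_z$ (a linear symplectomorphism, Model (1) of Definition \ref{bsymplecticmodel}) or by left translations $[k,\eta,v]\mapsto[h^m\cdot k,\eta,v]$ (Model (2)); identifying which case occurs requires Proposition \ref{prop:productaction2}. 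If the action on $Y^{H}_{z}$ were trivial, the quotient would reduce to $(T^*S^1/\mathbb{Z}_l)\times Y^{H}_{z}\cong T^*S^1\times Y^{H}_{z}$ and the finite quotient would carry no information; the paper's examples show this cannot be right --- in the curled-torus example $\mathbb{Z}_4$ acts on the slice by the order-four rotation in $\mathrm{GL}(2,\mathbb{Z})$, and in the $S^2\times S^2$ mapping torus $\mathbb{Z}_2$ acts on $V$ by $-\mathrm{Id}$. Your descent argument must therefore track the (generally nontrivial) $\Gamma_z$-representation on $Y^{H}_{z}$ induced by the slice-theorem identification.
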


An important step in the proof is the analysis of the group action along the critical set, $Z$ which is naturally endowed with a cosymplectic structure.
To achieve the proof  we first analyse the consequences of a cosymplectic manifold having a group action transverse to the symplectic foliation. In particular, we prove that given a cosymplectic action of a group $G$ then there are two distinct cases:
\begin{enumerate}
\item $G$ is a group isomorphic to the product of Lie groups $G = S^1 \times H$ or
\item $G=(S^1 \times H) / \Gamma$ where where $\Gamma \subset S^1 \times H$ is of the form $\Gamma=\mathbb{Z}_{l} \times \mathbb{Z}_{k}$ and $\mathbb{Z}_{k}$ is a non-trivial cyclic subgroup of $H$.
\end{enumerate}
We will prove that there is  a finite covering $\tilde{Z}$ of the cosymplectic manifold $Z$ which is trivial (in the sense that $\tilde{Z}\cong S^1 \times \mathcal{L}$) and this finite covering comes equipped with an $S^1\times H$-action which projects to the action of G on Z. Examples of cosymplectic manifolds with cosymplectic symmetries include in particular co-K\"{a}hler manifolds as discussed in \cite{bazzoni2014structure}, which inspired some techniques used here.

We remark that the aim here is to show the rigidity of $b$-symplectic group actions, for which the group action and symplectic form are completely determined in a neighbourhood of an orbit by the isotropy group and its representation on the symplectic normal space. { Therefore, Theorem \ref{bslicetheorem} does not reference the traditional moment map sometimes given as part of the symplectic slice theorem. Notwithstanding,  the combination of our $b$-symplectic slice theorem with the normal form stated in Remark \ref{remark:MGS} yields a normal form for $b$-Hamiltonian actions on $b$-symplectic manifolds.}

\vspace{2mm}

\paragraph{\textbf{Acknowledgements:}} {We are grateful to the anonymous referee for their suggestions and comments that improved this article and increased its readability.}  We are thankful to Konstantinos Efstathiou for  providing us the beautiful picture in this paper. We are thankful to the Fondation Sciences Mathématiques de Paris for financing the stay of Roisin Braddell and Anna Kiesenhofer in Paris and to \emph{Observatoire de Paris} for their hospitality in September 2017-February 2018 during which this project was initiated.

\section{{Preliminaries}}

\subsection{Introduction to $b$-symplectic geometry}

We briefly recall the basics of $b$-symplectic geometry, see \cite{guillemin2014symplectic} for details.

A \textbf{$b$-manifold} is a pair $(M,Z)$ of an oriented manifold $M$ and an oriented hypersurface $Z\subset M$. The hypersurface $Z$ is called the \textit{critical hypersurface}.

A \textbf{$b$-vector field} on a $b$-manifold $(M,Z)$ is a vector field which is tangent to $Z$ at every point $p\in Z$.

If $a$ is a local defining function for the hypersurface $Z$ on some open set $U\subset M$ and $(a,z_2,\ldots,z_{n})$ is a chart on $\mathcal{U}$, then the set of $b$-vector fields on $\mathcal{U}$ is a free $C^\infty(U)$-module with basis
\begin{equation}\label{localframe}
\left(a \frbd{a}, \frbd{z_2},\ldots, \frbd{z_{n}}\right).
\end{equation}
\noindent
The corresponding vector bundle, which exists by the Serre-Swan theorem \cite{swan1962vector}, is the \textit{$b$-tangent bundle}:
\begin{definition}
The \textbf{$b$-tangent bundle}, $^{b} TM$, is the vector bundle whose sections are $b$-vector fields.
\end{definition}

The classical exterior derivative $d$ on the complex of (smooth) $k$-forms extends to the complex of $b$-forms in a natural way. Any $b$-form of degree $k$ can locally be written
\begin{equation*}\label{bkform}
\omega=\alpha\wedge\frac{da}{a}+\beta
\end{equation*}
where $\alpha\in\Omega^{k-1}, \beta\in\Omega^k$, $a$ is a local defining function of $Z$ and $\frac{da}{a}$ is the $b$-one-form dual to $a \frbd{a}$ in a frame of the form \eqref{localframe}. The exterior derivative of $\omega$  is then given by
$$d\omega:=d\alpha\wedge\frac{da}{a}+d\beta.$$
\begin{definition}
A $b$-symplectic form is a $b$-form of degree 2 which is closed and non-degenerate as a $b$-form.
\end{definition}

If $Z$ is the critical hypersurface of a $b$-symplectic form, it can be shown that $Z$ has a codimension-one foliation by symplectic leaves (see \cite{guillemin2011codimension}). The hypersurface $Z$ is then cosymplectic as studied in  \cite{libermann1959automorphismes}.

We recall the following notions for symplectic codimension one foliations given in \cite{guillemin2011codimension}:
\begin{definition}
Let $\mathcal{F}$ be a codimension one symplectic foliation of a manifold $Z$. A form $\alpha \in \Omega^1(Z)$ is a \textbf{defining one-form} of $\mathcal{F}$ if it is nowhere vanishing and $\iota_{\mathcal{L}}^*\alpha=0$ for all leaves $\mathcal{L}$, where $\iota_{\mathcal{L}}$ is the inclusion $\mathcal{L}\hookrightarrow Z$, i.e. the kernel of $\alpha$ at any point $p\in Z$ is the tangent space of the leaf through $p$.

A form $\omega \in \Omega^2(Z)$ is a $\textbf{defining two-form}$ of $\mathcal{F}$ if $\iota_\mathcal{L}^*\omega$ is the given symplectic form on each leaf of the foliation.
\end{definition}

If $Z$ is the critical hypersurface of a $b$-symplectic manifold, then the defining one- and two-form of the induced symplectic foliation can be chosen to be \textit{closed} \cite{guillemin2011codimension}. Conversely, a manifold $Z$ with a codimension one symplectic foliation that admits closed defining one- and two-form $\alpha$ resp. $\beta$ can be extended to a $b$-symplectic manifold $M = Z \times \R$ with $b$-symplectic form
$$\omega = \pi_Z^* \alpha \wedge \frac{da}{a} + \pi_Z^* \beta.$$
where $\pi_Z:Z \times \R \to Z$ is the canonical projection and $a$ the coordinate on $\R$.\\

$b$-Symplectic manifolds can also be viewed dually as a particular class of Poisson manifolds. As such they have a modular vector field:

\begin{definition}
Let $(M, \Pi)$ be a Poisson manifold equipped with a volume form $\Omega$ and for each $f\in C^{\infty}(M)$ denote by $X_f$ the Hamiltonian vector field associated to $f$. Then the \textbf{modular vector field} of $(M, \Pi)$ is the following derivation on $C^{\infty}(M)$:
$$v_{mod}: C^{\infty}(M) \rightarrow \mathbb{R}: f \mapsto \frac{\mathcal{L}_{X_{f} \Omega}}{\Omega}.$$
\end{definition}
It can be shown that the modular vector field is a Poisson vector field and that the modular vector fields associated to different volume forms only differ by a Hamiltonian vector field.  {The following proposition gives a characterization of the modular vector field for  $b$-symplectic manifolds:}

\begin{proposition}[Proposition 25 in \cite{guillemin2014symplectic}]
{ The modular vector field of a $b$-symplectic manifold $(M, Z)$ is tangent to $Z$ and transverse to the symplectic leaves inside $Z$, independently of the volume form considered on $M$.}
\end{proposition}

Having chosen a modular vector field $v_{mod}$, we can choose defining one and two-forms of the symplectic foliation on $Z$ uniquely by imposing

\begin{equation}\label{emphthe}
    \alpha(v_{mod})=1 \text { and } \iota_{v_{mod}} \omega=0.
\end{equation}

We will call defining one- and two-forms fulfilling this condition \emph{the} defining one- and two-forms of the foliation. They are automatically closed \cite{guillemin2011codimension}.

\begin{remark}
 {Remark that the modular vector field of $Z$ viewed as a Poisson manifold does \emph{not} equal the modular vector field of the $b$-symplectic manifold $M$ restricted to $Z$. In fact, as shown in \cite{guillemin2011codimension}, $Z$ is unimodular when viewed as a Poisson manifold and so the modular vector field vanishes (up to the addition of a Hamiltonian vector field).}
\end{remark}

Finally, we note that the flow of the modular vector field can be used to define the mapping torus structure of $Z$ and define the \emph{modular period} of the $b$-symplectic form as follows (cf. \cite{guillemin2011codimension}):
\begin{definition}\label{mappingtorusstructure}
Let $(M,Z)$ be a $b$-symplectic manifold and suppose that $Z$ is compact and connected and that its symplectic foliation has a compact leaf $\mathcal{L}$. Then $Z$ is a mapping torus. More precisely, taking any modular vector field $v_{mod}$, there exists a  number $c>0$ such that
$$
Z \cong \frac{[0, c]\times \mathcal{L}}{(0,x) \sim(c,\phi(x))}
$$
where the time $t$-flow of $v_{mod}$ corresponds to translation by $t$ in the first coordinate. In particular, $\phi$ is the time $c$-flow of $v_{mod}$. The number $c>0$ is called the \textbf{modular period} of $Z$ and does not depend on the choice of modular vector field $v_{mod}$.
\end{definition}

For notational convenience we will consider rather $t \in [0, 1]$. The modular vector field is then given by
\begin{equation*}
    v_{mod}=\frac{1}{c}\frbd{t}
\end{equation*}
and the defining one-form is given by
$$\alpha = c dt.$$



The
$b$-analogue of the Moser theorem for symplectic manifolds is proved in  \cite{guillemin2014symplectic}.
\begin{theorem}[\textbf{$b$-Moser Theorem}]\label{theorem:bmoser}
Let $\omega_0$ and $\omega_1$ be two $b$-symplectic forms on $(M,Z)$.
If they induce on $Z$ the same corank one Poisson structure and
their modular vector fields differ on $Z$ by a Hamiltonian vector field, then there exist neighbourhoods
$ U_0, U_1$ of $Z$ in $M$ and a diffeomorphism $\gamma: U_0\rightarrow U_1$ such that
$\gamma|_Z=\text{id}_Z$ and $\gamma^*\omega_1=\omega_0$.
\end{theorem}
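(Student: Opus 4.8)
The plan is to run the relative Moser argument in the b-category. The only step that is genuinely new compared with the classical relative Moser theorem is a preliminary normalization showing that, under the two hypotheses, one may take $\omega_0$ and $\omega_1$ to coincide on the b-tangent bundle along $Z$; once this is achieved the difference is b-exact with a primitive vanishing on $Z$, and the usual Moser path method produces $\gamma$. To set this up I would first record how a b-symplectic form restricts to $^{b}TM$ along $Z$: for $p\in Z$ the b-tangent space splits canonically as $^{b}T_pM=T_pZ\oplus\R\langle a\,\partial_a\rangle$, where the b-normal direction $a\,\partial_a$ is independent of the chosen defining function $a$. Writing $\omega=\frac{da}{a}\wedge\alpha+\beta$ near $Z$, one checks $\iota_{a\partial_a}\omega\big|_{T_pZ}=\alpha$ and $\omega\big|_{T_pZ}=\beta$, so the restriction $\omega|_{^{b}T_pM}$ is encoded precisely by the defining one- and two-forms $(\alpha,\beta)$ of the induced foliation. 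Hence $\omega_0$ and $\omega_1$ agree on $^{b}TM|_Z$ if and only if they determine the same defining one- and two-forms on $Z$.

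Next I would use the two hypotheses to arrange exactly this agreement. The assumption that $\omega_0$ and $\omega_1$ induce the same corank-one Poisson structure on $Z$ fixes the symplectic foliation (hence $\ker\alpha$) and the leafwise symplectic forms. Since the modular vector field of a b-symplectic form is only defined up to a Hamiltonian vector field -- different volume forms change it by a Hamiltonian vector field -- and the two modular vector fields are assumed to differ on $Z$ by a Hamiltonian vector field, I may choose the auxiliary volume forms so that the two modular vector fields coincide on $Z$. With this common choice of $v_{mod}$, the normalization \eqref{emphthe} determines $(\alpha,\beta)$ uniquely from the Poisson data, so $\omega_0$ and $\omega_1$ share the same defining forms and therefore agree on $^{b}TM|_Z$.

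I would then close with the Moser path. Since $\sigma:=\omega_1-\omega_0$ is closed and vanishes on $^{b}TM|_Z$, a relative Poincaré lemma for b-forms on a collar $U\cong Z\times(-\epsilon,\epsilon)$, using the b-de Rham theory of \cite{guillemin2014symplectic, guillemin2011codimension}, produces a b-one-form $\mu$ near $Z$ with $d\mu=\sigma$ and $\mu$ vanishing along $Z$. Setting $\omega_t=(1-t)\omega_0+t\omega_1$, each $\omega_t$ is closed, and because $\omega_0$ and $\omega_1$ coincide as nondegenerate b-forms on $Z$, $\omega_t$ is nondegenerate along $Z$; by openness of nondegeneracy and compactness of $Z$ it stays nondegenerate on a smaller collar $U'$. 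Defining the time-dependent b-vector field $X_t$ by $\iota_{X_t}\omega_t=-\mu$, we find that $X_t$ vanishes along $Z$ (as $\mu$ does and $X_t$ is tangent to $Z$), so its flow $\gamma_t$ is defined on a neighbourhood of $Z$ for $t\in[0,1]$ and fixes $Z$ pointwise. The standard computation $\frac{d}{dt}\gamma_t^*\omega_t=\gamma_t^*\big(\mathcal{L}_{X_t}\omega_t+\sigma\big)=\gamma_t^*\big(d\iota_{X_t}\omega_t+d\mu\big)=0$ shows $\gamma_t^*\omega_t$ is constant; taking $\gamma:=\gamma_1$ yields $\gamma^*\omega_1=\omega_0$ with $\gamma|_Z=\mathrm{id}_Z$, as required.

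I expect the main obstacle to be the relative Poincaré lemma for b-forms yielding a primitive that vanishes along $Z$: this is where the singular behaviour of b-forms near $Z$ must be controlled, since one needs a homotopy operator keeping the primitive a smooth b-form while killing it on $Z$, and it is precisely the $Z$-vanishing of $\mu$ that forces the resulting isotopy to fix $Z$ rather than merely preserve it. The preliminary reduction to forms agreeing on $^{b}TM|_Z$ is the other delicate point, as it is what simultaneously guarantees nondegeneracy of the whole path $\omega_t$ and the applicability of the relative lemma.
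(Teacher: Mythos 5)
Your proposal is correct and is essentially the proof of this theorem, which the paper itself does not reprove but quotes from \cite{guillemin2014symplectic}: you repackage the two hypotheses as equality of the normalized defining one- and two-forms on $Z$ (exactly the paper's remark immediately after the theorem, using the normalization \eqref{emphthe} and the fact that changing the auxiliary volume form shifts the modular vector field by a Hamiltonian vector field), and then run a relative Moser path with a $b$-primitive vanishing along $Z$, which is the same technique the paper itself deploys in Proposition \ref{th:equivariantrelativeMoser}. The only cosmetic slips are calling the splitting $^{b}T_pM = T_pZ \oplus \R\langle a\,\partial_a\rangle$ canonical (only the line $\R\langle a\,\partial_a\rangle$ and the quotient are canonical; the splitting requires a choice of defining function, which your argument in any case fixes) and invoking compactness of $Z$, which the statement does not assume and which is unnecessary, since nondegeneracy of $\omega_t$ and existence of the time-one flow of the $Z$-vanishing field $X_t$ hold on a neighbourhood of $Z$ by a pointwise openness argument.
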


The condition that  $\omega_0$ and $\omega_1$ induce the same Poisson structure on $Z$ and the same modular vector field (up to a Hamiltonian vector field) is equivalent to demanding that the induced symplectic foliations have the same defining one- and two-forms.

A consequence is the following semilocal model {proved} in \cite{guillemin2014symplectic}:
\begin{corollary}[Extension Theorem]\label{extension}
Let $(M,Z)$ be a $b$-symplectic manifold where $Z$ is compact and connected. Then there is a neighbourhood of $Z$ in $M$ that is $b$-symplectomorphic to the collar neighbourhood $Z\times (-\epsilon,\epsilon)$ with $b$-symplectic form
\begin{equation}\label{semilocal}
\omega=  {\pi_{Z}^*} \alpha \wedge \frac{da}{a} + \pi_{Z}^* \beta.
\end{equation}
where $\alpha$, $\beta$ are the defining one- resp. two-forms on $Z$, $a$  is the coordinate on the interval $(-\epsilon,\epsilon)$ and $\pi_Z$ the projection of the collar to $Z$.
\end{corollary}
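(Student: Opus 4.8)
The plan is to deduce this semilocal model directly from the $b$-Moser theorem (Theorem \ref{theorem:bmoser}) by exhibiting, on a collar of $Z$, a model $b$-symplectic form that shares with $\omega$ the data that $b$-Moser requires. First I would fix a tubular neighbourhood: since $(M,Z)$ is a $b$-manifold, $M$ is oriented and $Z$ is an oriented (hence two-sided) hypersurface, so its normal bundle is trivial and there is a diffeomorphism from a neighbourhood of $Z$ in $M$ onto $Z\times(-\epsilon,\epsilon)$ restricting to the identity on $Z=Z\times\{0\}$. Let $a$ denote the coordinate on $(-\epsilon,\epsilon)$; it is a defining function for $Z$. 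Transporting $\omega$ through this diffeomorphism, I may assume from now on that $\omega$ lives on the collar $Z\times(-\epsilon,\epsilon)$. Let $\alpha\in\Omega^1(Z)$ and $\beta\in\Omega^2(Z)$ be \emph{the} defining one- and two-forms of the symplectic foliation that $\omega$ induces on $Z$, normalized as in \eqref{emphthe}; by the cited results of \cite{guillemin2011codimension} these can be, and are, taken closed. Using exactly these forms I define the model
\begin{equation*}
\omega_1:=\pi_Z^*\alpha\wedge\frac{da}{a}+\pi_Z^*\beta
\end{equation*}
on the collar, which is the extension construction already recalled in the preliminaries.

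Next I would verify that $\omega_1$ is itself a $b$-symplectic form. Closedness is immediate from $d\alpha=d\beta=0$ and the definition of the $b$-differential. For non-degeneracy, writing $\dim M=2n$, the only surviving top-degree term of $\omega_1^{\,n}$ is $n\,\pi_Z^*\alpha\wedge\frac{da}{a}\wedge\pi_Z^*\beta^{\,n-1}$, since $(\pi_Z^*\alpha\wedge\frac{da}{a})^2=0$ and $(\pi_Z^*\beta)^n=0$ for degree reasons on the $(2n-1)$-dimensional factor $Z$. This $b$-volume form is nowhere zero precisely because $\alpha\wedge\beta^{\,n-1}$ is a volume form on $Z$, which is the cosymplectic condition satisfied by the defining forms of $\omega$. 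Hence $\omega_1$ is $b$-symplectic, and by construction its induced symplectic foliation on $Z$ has defining one- and two-forms equal to $\alpha$ and $\beta$.

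Finally I would check that $\omega$ and $\omega_1$ meet the hypotheses of the $b$-Moser theorem and conclude. By construction both forms induce on $Z$ the symplectic foliation with the \emph{same} defining one-form $\alpha$ and two-form $\beta$; by the equivalence stated immediately after Theorem \ref{theorem:bmoser}, this is exactly the condition that $\omega$ and $\omega_1$ induce the same corank-one Poisson structure on $Z$ and have modular vector fields differing by a Hamiltonian vector field. Theorem \ref{theorem:bmoser} then yields neighbourhoods of $Z$ and a diffeomorphism $\gamma$ fixing $Z$ pointwise with $\gamma^*\omega_1=\omega$; composing $\gamma$ with the collar identification gives the desired $b$-symplectomorphism onto $(Z\times(-\epsilon,\epsilon),\,\omega_1)$, after relabelling the model form as in \eqref{semilocal}. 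The step I expect to require the most care is the matching of defining data: one must confirm that the defining forms extracted from $\omega$ on the genuine hypersurface $Z\subset M$ coincide with those used to build $\omega_1$, i.e. that normalizing via \eqref{emphthe} makes the identification of $b$-Poisson foliations canonical; once this is pinned down, non-degeneracy of $\omega_1$ and the invocation of $b$-Moser are routine.
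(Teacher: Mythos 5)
Your proposal is correct and follows exactly the route the paper intends: the corollary is presented there as a direct consequence of the $b$-Moser theorem (Theorem \ref{theorem:bmoser}) together with the stated equivalence between matching Poisson data/modular vector fields and matching closed defining one- and two-forms, which is precisely the comparison you set up between $\omega$ and the model $\pi_Z^*\alpha\wedge\frac{da}{a}+\pi_Z^*\beta$ on a collar. Your verification of non-degeneracy via the cosymplectic condition $\alpha\wedge\beta^{n-1}\neq 0$ and your care about normalizing the defining forms via \eqref{emphthe} are exactly the details the paper leaves to the reference \cite{guillemin2014symplectic}.
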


As noted in \cite{guillemin2015toric}, by averaging the vector fields of the $b$-Moser theorem, given two $b$-symplectic forms invariant under a group action and $b$-symplectomorphic by the $b$-Moser theorem, we can choose the $b$-symplectomorphism to be equivariant with respect to the group action.  In the special case where $M$ is two-dimensional this yields the following semilocal normal form:
\begin{proposition}
Let $(M,Z)$ be a two-dimensional $b$-symplectic manifold with compact connected critical hypersurface $Z$ and modular period $c>0$. Then $Z \cong S^1$ and there exists a  neighbourhood of $Z$ which is $b$-symplectomorphic to $S^1 \times (-\epsilon,\epsilon)$ with  $b$-symplectic form
\begin{equation}\label{twistedsymplectic}
\omega_c:= c dt \wedge \frac{da}{a}.
\end{equation}
Here $(t,a)$ are the standard coordinates on $S^1 \times \mathbb{R}$.
\end{proposition}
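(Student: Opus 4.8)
The plan is to deduce everything from the cited semilocal results, since in dimension two the normal form degenerates almost completely. First, the identification $Z \cong S^1$ is immediate: as the critical hypersurface of a $b$-symplectic form on a surface, $Z$ is a codimension-one submanifold of $M$, hence a compact, connected, oriented one-manifold, and the only such manifold is the circle. So the first assertion requires no real work.

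For the normal form I would invoke the Extension Theorem (Corollary \ref{extension}), which already supplies a $b$-symplectomorphism from a neighbourhood of $Z$ onto the collar $Z \times (-\epsilon,\epsilon) \cong S^1 \times (-\epsilon,\epsilon)$ carrying the $b$-symplectic form \eqref{semilocal}, namely $\omega = \alpha \wedge \frac{da}{a} + \pi_Z^*\beta$, where $\alpha$ and $\beta$ are \emph{the} defining one- and two-forms of the symplectic foliation of $Z$, normalized as in \eqref{emphthe}. Thus the existence of a collar and the diffeomorphism type are handed to us; what remains is to pin down $\alpha$ and $\beta$ explicitly.

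The key simplification special to the two-dimensional case is that $\beta = 0$. Indeed $Z$ is one-dimensional, so $\Omega^2(Z) = 0$ and every two-form on $Z$ vanishes identically; equivalently, the leaves of the symplectic foliation are points, carrying the trivial symplectic form. Hence the term $\pi_Z^*\beta$ drops out and $\omega = \alpha \wedge \frac{da}{a}$. To identify $\alpha$, I would use the mapping torus description of Definition \ref{mappingtorusstructure}: since the compact leaf $\mathcal{L}$ is a single point, the mapping torus $([0,c]\times\mathcal{L})/\!\sim$ is literally a circle of circumference equal to the modular period $c$, with trivial gluing map, and the modular vector field is tangent to $Z$ with flow translating this circle. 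Normalizing the circle coordinate to $t \in [0,1]$ as in the discussion preceding the statement, the modular vector field becomes $\tfrac{1}{c}\frbd{t}$ and the defining one-form fixed by $\alpha(v_{mod}) = 1$ is $\alpha = c\,dt$. Substituting gives $\omega = c\,dt \wedge \frac{da}{a} = \omega_c$, which is exactly \eqref{twistedsymplectic}.

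I do not expect a genuine obstacle here, as all the analytic content (the collar construction and the uniqueness up to $b$-symplectomorphism) is carried by the Extension and $b$-Moser theorems. The only two points demanding any care are the bookkeeping that the defining forms produced by Corollary \ref{extension} are precisely the normalized ones of \eqref{emphthe}, so that $\alpha = c\,dt$ with the stated $c$, and the elementary dimension count $\Omega^2(Z) = 0$ that annihilates the $\beta$-term; it is this vanishing that makes the two-dimensional model maximally rigid, leaving only the modular period $c$ as an invariant.
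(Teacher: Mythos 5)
Your argument is correct and follows exactly the route the paper intends: the proposition is stated there as the two-dimensional specialization of the Extension Theorem (Corollary \ref{extension}), and you supply precisely the missing bookkeeping --- $Z$ is a compact connected oriented one-manifold hence $S^1$, the term $\pi_Z^*\beta$ vanishes because $\Omega^2(Z)=0$, and the normalization \eqref{emphthe} together with the mapping-torus description forces $\alpha = c\,dt$. Nothing further is needed.
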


It will be convenient to view the $b$-symplectic manifold  $S^1 \times (-\epsilon,\epsilon)$ as a neighbourhood of the zero section of the cotangent bundle $T^* S^1 \cong S^1 \times \R$ with $b$-symplectic form given by the formula in Equation \eqref{twistedsymplectic}. We also remark for future purposes that $\omega_c$ is clearly invariant under the cotangent lift of the action of $S^1$ on itself by translations.

{In this article we characterize the normal form for actions which preserve a $b$-symplectic form  ($b$-symplectic actions). Among the class of $b$-symplectic actions, the $b$-Hamiltonian class plays a central role.
We end up this section of preliminaries with the definition of $b$-Hamiltonian action. We refer  the interested reader to the articles \cite{guillemin2015toric, GMPS2,GMWconvexity, GMWgeomq}. }

 {
    \begin{definition} \label{def:bHamMM}
        The action of $G$ on a $b$-symplectic manifold $(M, Z, \omega)$ is called $b$-Hamiltonian if there exists a moment map $\mu \in ^{b}\mathcal{C}^\infty(M) \otimes \mathfrak{g}^*$ with
        $$
            \iota(\upsilon_\xi) \omega = \left < d \mu, \xi\right >
        $$
        where $\upsilon_\xi$ is the fundamental vector field associated to $\xi\in \mathfrak{g}$ and the set of $b$-functions is defined as $^{b}\mathcal{C}^\infty (M) = \{a \log |t| + g, g \in \mathcal{C}^\infty (M)\}$.
    \end{definition}}

   { In other words, the action is $b$-Hamiltonian if it preserves the $b$-symplectic form and the form $\iota(\upsilon_\xi) \omega$ is exact.}

\subsection{Transversally equivariant fibrations}

A bundle map $\pi: Z \rightarrow S^1$ is a \textit{transversally equivariant fibration} if there is a smooth $S^1$ -action on $Z$ such that the orbits of the action are transversal to the fibres of $\pi$ and $\pi(t \cdot x)-\pi(x)$ depends on $t \in S^1$ only. The following is a specialization of a theorem by Sadowski  which was applied to the case of co-K\"{a}hler manifolds in \cite{bazzoni2014structure}.

\begin{theorem}\label{sadowski}
Let $Z \stackrel{\pi}{\rightarrow} {S}^{1}$ be a smooth bundle projection from a  closed manifold $Z$ to the circle. The following are equivalent:
\begin{enumerate}
\item $Z \stackrel{\pi}{\rightarrow} {S}^{1}$ is a mapping torus associated to a diffeomorphism of finite order
\item The bundle map $\pi$ is transversally equivariant with respect to an ${S}^{1}$-action on $Z$, $\rho: {S}^{1} \times Z \rightarrow Z$.
\end{enumerate}
Let $\mathcal{L}$ be the fibre of $\pi$. If the above conditions are satisfied then $Z$ has a $\mathbb{Z}_k$-cover ($k \in \N$)
$$p: \tilde{Z}=S^{1} \times \mathcal{L} \rightarrow Z$$
given by the action $(t,l)\mapsto \rho_t(l)$, where $\mathbb{Z}_k$ acts diagonally on $ S^{1} \times \mathcal{L}$ and by translations on $ S^{1}$.
\end{theorem}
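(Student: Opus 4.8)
The plan is to prove the equivalence $(1)\Leftrightarrow(2)$ directly and then read off the covering from the constructed $S^1$-action.

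I would begin with $(2)\Rightarrow(1)$. The hypothesis says that $f(t):=\pi(t\cdot x)-\pi(x)$ is independent of $x$; combining this with associativity of the action gives $f(s+t)=f(s)+f(t)$, so $f$ is a smooth homomorphism $S^1\to S^1$ and hence $f(t)=mt$ for its degree $m\in\mathbb{Z}$. Transversality of the orbits to the fibres rules out $m=0$, and after replacing the action by its inverse if necessary I may assume $m>0$. Since $f(1/m)\equiv 0$, the time-$1/m$ map $\phi:=\rho_{1/m}$ preserves the fibre $\mathcal{L}=\pi^{-1}(0)$, and $\phi^{m}=\rho_{1}=\mathrm{id}$ shows that $\phi|_{\mathcal{L}}$ has finite order dividing $m$. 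The fundamental vector field of the action is everywhere transverse to the fibres, so flowing $\mathcal{L}$ along it for times $s\in[0,1/m]$ sweeps out $Z$ exactly once and glues $\{1/m\}\times\mathcal{L}$ to $\{0\}\times\mathcal{L}$ via $\phi$; this is precisely a diffeomorphism of $Z$ with the mapping torus of the finite-order diffeomorphism $\phi|_{\mathcal{L}}$.

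For $(1)\Rightarrow(2)$ I would start from $Z=(\mathbb{R}\times\mathcal{L})/\mathbb{Z}$, where $n\cdot(s,x)=(s+n,\phi^{-n}(x))$ and $\pi[s,x]=s\bmod 1$, with $\phi^{k}=\mathrm{id}$. The translation flow $\tau\cdot[s,x]=[s+\tau,x]$ is well defined because it commutes with the $\mathbb{Z}$-action, is transverse to the fibres, and satisfies $\pi(\tau\cdot z)-\pi(z)=\tau\bmod 1$; since $\phi^{k}=\mathrm{id}$ the time-$k$ map is the identity, so this $\mathbb{R}$-action descends to an action of $\mathbb{R}/k\mathbb{Z}\cong S^1$. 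This is the required transversally equivariant circle action, and its monodromy is again $\phi$ with $f$ of degree $k$. Finally I would deduce the covering. Replacing $S^1$ by its effective quotient (still a circle, still transversally equivariant) I may assume the order of $\phi$ equals the degree $k$ of $f$, and I define $p\colon S^1\times\mathcal{L}\to Z$ by $p(t,l)=\rho_t(l)$. As $\rho_t$ is a diffeomorphism carrying $\mathcal{L}=\pi^{-1}(0)$ onto the fibre $\pi^{-1}(kt)$ while the orbit direction is transverse to the fibres, $dp$ is an isomorphism at every point, so $p$ is a surjective local diffeomorphism. Solving $\rho_t(l)=\rho_{t'}(l')$ amounts to $\rho_{t-t'}(l)=l'$ with both points in $\mathcal{L}$; applying $\pi$ forces $k(t-t')\equiv 0$, i.e. $t-t'=j/k$, and then $l'=\phi^{j}(l)$. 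Thus the fibres of $p$ are exactly the orbits of the free $\mathbb{Z}_k$-action $j\cdot(t,l)=(t-j/k,\phi^{j}(l))$, which translates the $S^1$-factor and acts diagonally, so $p$ realizes $S^1\times\mathcal{L}$ as the claimed $\mathbb{Z}_k$-cover of $Z$.

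The main obstacle I anticipate is the bookkeeping linking the dynamical picture (the flow of the fundamental vector field) to the combinatorial one (mapping torus and quotient by $\mathbb{Z}_k$): one must check that the flow-out map is a genuine global diffeomorphism across the gluing, and, for the covering, that the degree of $f$ coincides with the order of $\phi$ so that the deck group is exactly $\mathbb{Z}_k$ acting freely. The reduction to an effective action is what makes these two integers agree, and once that normalization is in place the remaining verifications are routine.
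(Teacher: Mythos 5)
The paper never proves Theorem \ref{sadowski}: it is imported as a specialization of a theorem of Sadowski (via Bazzoni--Oprea), and only the explicit description of the $\mathbb{Z}_k$-action is spelled out afterwards. Your argument is therefore doing more than the paper does, and it is essentially correct: the additivity $f(s+t)=f(s)+f(t)$ giving $f(t)=mt$, the exclusion of $m=0$ by transversality, the sweep-out map $[0,1/m]\times\mathcal{L}\to Z$, $(s,l)\mapsto\rho_s(l)$, being a diffeomorphism onto the mapping torus of $\phi=\rho_{1/m}|_{\mathcal{L}}$, the descent of the translation flow on $(\mathbb{R}\times\mathcal{L})/\mathbb{Z}$ to an $S^1$-action, and the computation that the fibres of $p(t,l)=\rho_t(l)$ are the orbits of $j\cdot(t,l)=(t-\tfrac{j}{k},\phi^j(l))$ all check out, and the last item reproduces exactly the action $\mu_m$ the paper writes below the theorem. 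The one place you assert rather than prove is the normalization that for an \emph{effective} action the order of $\phi|_{\mathcal{L}}$ equals the degree $k$ of $f$. This deserves its one-line justification: if $\phi^j=\mathrm{id}$ on $\mathcal{L}$ for some $0<j<k$, then since every $z\in Z$ is of the form $\rho_s(l)$ with $l\in\mathcal{L}$ (your sweep-out argument) and $S^1$ is abelian, $\rho_{j/k}(z)=\rho_s(\rho_{j/k}(l))=z$, so $\rho_{j/k}=\mathrm{id}$ on all of $Z$, contradicting effectiveness. Note also that even without this normalization the fibre computation already yields a free $\mathbb{Z}_m$-cover with $m=\deg f$; the effectiveness reduction is only needed to identify the deck group with the leaf-fixing subgroup $\Gamma$ of \eqref{discretegroup}, which is how the paper uses the result. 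With that line added your proof is complete.
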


Explicitly, we can describe the $\mathbb{Z}_k$-action as follows: Consider the leaf-fixing subgroup of $S^1$,
\begin{equation}\label{discretegroup}
\Gamma = \{s \in S^1 \,: \, \rho_s(\mathcal{L})= \mathcal{L}\}.
\end{equation}
Identifying $S^1\cong \mathbb{R} \,\mod\, 1$, the group $\Gamma$ is of the form $\{0,\frac{1}{k},\ldots,\frac{k-1}{k}\}$ for some $k\in \mathbb{N}$ and hence we can identify it with $\mathbb{Z}_k$ in the natural way. Then for $m\in \mathbb{Z}_k = \{0,1,\ldots, k-1\}$, the action $\rho_{\frac{m}{k}}$ restricts to  a leaf automorphism
\begin{equation}\label{leafaction}
    \sigma_m: \mathcal{L} \rightarrow \mathcal{L},\quad  \sigma_m(l)= \rho_{\frac{m}{k}}(l).
\end{equation}
 The mapping torus $Z$ is then the quotient of the cover $\tilde{Z}$ by the following action of $\mathbb{Z}_k$ on $\tilde{Z}$
\begin{equation}
\mu_m(t,l)=(t-\frac{m}{k},\sigma_{m}(l)),\quad m\in \mathbb{Z}_k,\, (t,l)\in \tilde Z = S^1 \times \mathcal{L}.
\end{equation}

From the condition of transverse equivariance, it is clear that $\rho$ maps leaves to leaves. It induces an action on the base $S^1$  given by translations
$t \mapsto t + ks$
and the equivariance condition reads
$$\pi(\rho_s(l)) = k s, \quad l\in  \mathcal{L}:= \pi^{-1}(\{0\}) .$$


There is an associated ${S}^1$-action  $\tilde{\rho}$ on the cover $\tilde{Z}$ given by
\begin{equation}\label{covercicleaction}
\tilde{\rho}_s(t,l)=(t+s,l),\quad s\in S^1, (t,l)\in S^1 \times \mathcal{L}.
\end{equation}
The projection $\tilde Z \to Z$ is equivariant with respect to this action.

The existence of a finite trivializing cover of the critical hypersurface $Z$ will play a crucial role in the $b$-symplectic slice theorem.

\section{A trivializing cover for the critical hypersurface}

Now we consider $(M,Z)$ a $b$-symplectic manifold. As we focus on a semi local result, we will assume $M \cong Z\times (-\epsilon,\epsilon)$ where the critical hypersurface $Z$ is compact and connected  with $b$-symplectic form given by Equation \eqref{semilocal}. On a semilocal level the last assumption is not an additional restriction, since as we have seen in the previous section, any $b$-symplectic manifold satisfying the previous conditions is of this form on a tubular neighbourhood of its critical hypersurface. {We will further assume for the rest of the article that $Z$ has a compact leaf. Note that according to \cite{guillemin2011codimension} this implies that $Z$ has a mapping torus structure (though not necessarily the same mapping torus structure given by Theorem \ref{sadowski}).}

\begin{definition}
A group action on a $b$-symplectic manifold is called \emph{transverse} if it is transverse to the symplectic foliation of the critical hypersurface. If the action, restricted to the critical hypersurface, preserves the cosymplectic structure we will call the action \emph{cosymplectic}. Finally, if the action preserves the $b$-symplectic form we will call the action \emph{$b$-symplectic}.
\end{definition}

Cosymplectic and $b$-symplectic actions are special cases of Poisson actions, when considering the manifolds with the associated Poisson structures.

{ As cosymplectic actions are leaf preserving}, cosymplectic actions transverse to the symplectic foliation are automatically transversely equivariant { where the relevant bundle map $\pi: Z \rightarrow S^{1}$ is a projection to the base of the mapping torus. Indeed, the $S^1$-action being leaf-preserving implies by definition that $\pi(t \cdot x)-\pi(x)$ depends only on $t \in S^{1}$.} The next proposition then follows directly from Theorem \ref{sadowski}:

\begin{proposition}\label{finitecover}
Let $Z$ be a cosymplectic manifold and suppose $Z$ has a transverse $S^1$-action preserving the cosymplectic structure. Then $Z$ has a finite cover $\tilde{Z}:=S^1 \times \mathcal{L}$, $\mathcal{L}$ a leaf of the foliation, equipped with an $S^1$-action given by translation in the first coordinate for which the projection $p:S^1 \times \mathcal{L} \rightarrow Z$ is equivariant.
\end{proposition}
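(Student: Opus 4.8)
The plan is to show that the hypotheses turn the bundle projection $\pi\colon Z\to S^1$ coming from the mapping torus structure into a \emph{transversally equivariant fibration} in the sense of Section~2, after which the conclusion is precisely the output of Theorem~\ref{sadowski}. First I would set up the relevant data: since $Z$ is compact, connected and carries a compact leaf, Definition~\ref{mappingtorusstructure} endows it with a mapping torus structure and hence a smooth projection $\pi\colon Z\to S^1\cong\R/\Z$ whose fibres are exactly the leaves of the symplectic foliation, i.e. the integral submanifolds of $\ker\alpha$. In the normalized coordinates recalled before Theorem~\ref{theorem:bmoser} one has $\alpha=c\,\pi^*(d\theta)$, with $\theta$ the coordinate on $\R/\Z$ and $c$ the modular period, so that $\pi^*(d\theta)=\tfrac1c\,\alpha$. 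Because $S^1$ is compact I may also assume, after averaging the modular vector field, that the defining one-form $\alpha$ singled out by \eqref{emphthe} is itself invariant under the action; this is what "preserving the cosymplectic structure" amounts to at the level of defining forms.

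The core step is a short computation. Let $\xi$ be the fundamental vector field of the $S^1$-action and let $\rho$ denote the action. Invariance gives $\mathcal{L}_\xi\alpha=0$, and since $\alpha$ is closed, Cartan's formula yields $0=\mathcal{L}_\xi\alpha=d(\iota_\xi\alpha)$; hence $\alpha(\xi)$ is locally constant and therefore equal to a constant $a$ on the connected manifold $Z$. Transversality of the action to the foliation says $\xi\notin\ker\alpha$ everywhere, so $a\neq 0$. From this I would read off the two defining conditions of a transversally equivariant fibration: the orbits of $\rho$ are transverse to the fibres of $\pi$ because the fibres are the leaves and the action is transverse to the foliation by hypothesis; and, since $d\pi(\xi)=\tfrac1c\,\alpha(\xi)=\tfrac{a}{c}$ is constant, integrating along an orbit gives
$$\pi(\rho_s(x))-\pi(x)=\tfrac{a}{c}\,s,$$
which depends only on $s$. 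Evaluating at $s=1$ and using $\rho_1=\mathrm{id}$ forces $\tfrac{a}{c}\in\Z$, so this integer is exactly the $k$ appearing in the $\mathbb{Z}_k$-cover of Theorem~\ref{sadowski}.

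With transverse equivariance established, Theorem~\ref{sadowski} applies directly and produces the $\mathbb{Z}_k$-cover $\tilde Z=S^1\times\mathcal{L}$ together with the equivariant projection $p\colon S^1\times\mathcal{L}\to Z$ and the $S^1$-action \eqref{covercicleaction} acting by translation in the first factor, which is the assertion of the proposition. I expect the only genuinely delicate point to be the middle paragraph: one must be sure that the action preserves \emph{a} closed defining one-form (handled by averaging, using compactness of $S^1$) and then exploit closedness together with connectedness of $Z$ to upgrade pointwise non-tangency into the global constancy of $\alpha(\xi)$. Everything else is either a direct hypothesis (transversality) or a black-box appeal to Sadowski's theorem.
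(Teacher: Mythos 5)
Your argument is correct and follows essentially the same route as the paper: the paper's entire proof consists of the remark that cosymplectic actions are automatically transversally equivariant, so that the statement ``follows directly from Theorem \ref{sadowski}'' --- which is exactly your reduction. The only difference is that you actually supply the short verification of transverse equivariance (invariance and closedness of $\alpha$ give $d(\iota_\xi\alpha)=0$, hence $\alpha(\xi)$ is a nonzero constant and $\pi(\rho_s(x))-\pi(x)$ depends only on $s$), a step the paper leaves implicit.
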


To get a cosymplectic structure on the cover, one simply lifts the associated defining one and two-forms.

\begin{proposition}\label{quotientpoissonstructure}
In the setting of the previous proposition, the cosymplectic structure on $Z$ is given by the quotient of a cosymplectic structure on $\tilde{Z}=S^1\times \mathcal{L}$ by the action of a finite cyclic group $\mathbb{Z}_k$.
\end{proposition}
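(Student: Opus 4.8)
The plan is to realize the cosymplectic structure on $\tilde{Z}$ as the pullback of the defining data on $Z$ along the covering map $p$, and then to observe that this pullback is invariant under the deck group of the cover, so that the original structure on $Z$ is recovered as the quotient. This is precisely the lifting procedure announced in the remark preceding the statement.

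First I would recall that, in the conventions of the preliminaries, the cosymplectic structure on $Z$ is encoded by its defining one- and two-forms $\alpha \in \Omega^1(Z)$ and $\beta \in \Omega^2(Z)$, both closed, with $\alpha$ nowhere vanishing and $\beta$ restricting to the leafwise symplectic form; equivalently $\alpha \wedge \beta^n$ is a volume form on $Z$, where $\dim Z = 2n+1$. By Proposition \ref{finitecover} (via Theorem \ref{sadowski}) the projection $p: \tilde{Z} = S^1 \times \mathcal{L} \to Z$ is a $\mathbb{Z}_k$-covering, and in particular a local diffeomorphism. I would therefore set $\tilde{\alpha} := p^*\alpha$ and $\tilde{\beta} := p^*\beta$.

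Next I would check that $(\tilde{\alpha}, \tilde{\beta})$ is a cosymplectic structure on $\tilde{Z}$. Closedness is immediate, since $p^*$ commutes with $d$ and $\alpha, \beta$ are closed. For non-degeneracy, because $p$ is a local diffeomorphism the map $p^*$ is a pointwise isomorphism on each cotangent space, so it carries the nowhere-vanishing $\alpha$ to a nowhere-vanishing $\tilde{\alpha}$ and sends $\alpha \wedge \beta^n$ to $\tilde{\alpha} \wedge \tilde{\beta}^n = p^*(\alpha \wedge \beta^n)$, which is again a volume form. Hence $(\tilde{\alpha}, \tilde{\beta})$ satisfies the defining conditions, and the induced symplectic foliation of $\tilde{Z}$ is the $p$-preimage of the foliation of $Z$; this is the desired lift. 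Finally I would verify invariance under the deck group and conclude: the covering is the quotient of $\tilde{Z}$ by the free $\mathbb{Z}_k$-action $\mu_m(t,l) = (t - \tfrac{m}{k}, \sigma_m(l))$ described after Theorem \ref{sadowski}, and these deck transformations satisfy $p \circ \mu_m = p$. Consequently $\mu_m^*\tilde{\alpha} = (p \circ \mu_m)^* \alpha = p^*\alpha = \tilde{\alpha}$, and likewise $\mu_m^* \tilde{\beta} = \tilde{\beta}$, so the lifted structure is $\mathbb{Z}_k$-invariant. Since $p$ is exactly the quotient map $\tilde{Z} \to \tilde{Z}/\mathbb{Z}_k = Z$ and $(\tilde{\alpha}, \tilde{\beta})$ is invariant, the pair descends, and by construction the descended structure is the original $(\alpha, \beta)$.

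I expect no serious obstacle here; the proposition is essentially a bookkeeping statement about pullback along a finite covering. The one point requiring care is confirming that the $\mathbb{Z}_k$-action from Theorem \ref{sadowski} is precisely the group of deck transformations of $p$ acting freely, so that the quotient is a smooth manifold and the invariant forms genuinely descend; this is standard for finite-order covering actions. A secondary verification is that the leafwise symplectic forms on the leaves of $\tilde{Z}$ agree with those pulled back from $Z$, which again follows from $p$ being a local diffeomorphism intertwining the two foliations.
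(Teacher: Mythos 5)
Your proposal is correct and follows exactly the paper's argument: pull back the defining one- and two-forms along the covering $p$, note the pullbacks give a cosymplectic structure on $S^1\times\mathcal{L}$, and observe that by construction the quotient structure agrees with the original. You simply spell out the verifications (closedness, non-degeneracy via $p$ being a local diffeomorphism, deck-group invariance) that the paper compresses into ``can easily be shown.''
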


\begin{proof}

Let $p:\tilde{Z} \to Z$ be the finite cover given by Proposition \ref{finitecover}. Denote the one and two forms of the cosymplectic structure by $\alpha$ and $\beta$ respectively. Then $\tilde{\beta}=p^*\beta$ and $\tilde{\alpha}=p^*\alpha$ can easily be shown to define a cosymplectic structure on $S^1\times \mathcal{L}$ and by construction, the cosymplectic structure on the quotient agrees with the cosymplectic structure on $Z$.
\end{proof}

To extend this cover to a $b$-symplectic neighbourhood of $Z$ we simply use the extension theorem (Corollary \ref{extension}):

\begin{corollary}\label{quotientbstructure}
Let $M = Z \times (-\epsilon,\epsilon)$ come equipped with a transverse  $S^1$-action preserving the $b$-symplectic form $\omega$. Then the $b$-symplectic structure on $M$ is $b$-symplectomorphic in a neighbourhood of $Z$ to the quotient of a $b$-symplectic structure on $ S^1\times \mathcal{L} \times (-\epsilon,\epsilon)$ by the action of a finite cyclic group.
\end{corollary}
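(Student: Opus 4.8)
The plan is to lift the collar model of $\omega$ to the trivializing cover of Proposition \ref{finitecover} and then realize $\omega$ as a finite quotient there. By the standing assumption of this section---equivalently, by the Extension Theorem (Corollary \ref{extension})---a neighbourhood of $Z$ in $M$ carries the collar form $\alpha \wedge \frac{da}{a} + \pi_Z^* \beta$, where $\alpha,\beta$ are the defining one- and two-forms of the foliation on $Z$, normalized as in Equation \eqref{emphthe}. It therefore suffices to exhibit this collar form as the quotient of a $b$-symplectic form by a finite cyclic group.

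To do so I would take the $\mathbb{Z}_k$-cover $p: \tilde Z = S^1 \times \mathcal{L} \to Z$ from Proposition \ref{finitecover}, equipped with the cosymplectic structure $\tilde\alpha = p^*\alpha$, $\tilde\beta = p^*\beta$ of Proposition \ref{quotientpoissonstructure}. On $\tilde M := S^1 \times \mathcal{L} \times (-\epsilon,\epsilon)$ set
\[
\tilde\omega = \tilde\alpha \wedge \frac{da}{a} + \pi_{\tilde Z}^* \tilde\beta ;
\]
this is $b$-symplectic by the extension construction recalled before Corollary \ref{extension}, its closedness coming from that of $\tilde\alpha,\tilde\beta$ and its non-degeneracy from the cosymplectic condition. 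The deck group $\mathbb{Z}_k$, acting on $\tilde Z$ through the maps $\mu_m(t,l) = (t - \tfrac{m}{k}, \sigma_m(l))$, extends to $\tilde M$ by acting trivially on the $(-\epsilon,\epsilon)$-factor.

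The heart of the argument is that this action preserves $\tilde\omega$ and that the quotient returns the collar model. Invariance is immediate, since $p\circ\mu_m = p$ gives $\mu_m^*\tilde\alpha = \tilde\alpha$ and $\mu_m^*\tilde\beta = \tilde\beta$, while $a$ and hence $\tfrac{da}{a}$ are fixed. Because the deck action is free and trivial in the interval direction, $\tilde M/\mathbb{Z}_k$ is a smooth $b$-manifold, canonically identified with $Z\times(-\epsilon,\epsilon)$ via the covering map $q = p \times \mathrm{id}$. Since $\tilde\omega = q^*\big(\alpha \wedge \frac{da}{a} + \pi_Z^*\beta\big)$ and $q^*$ is injective on forms, the descended form is exactly $\alpha \wedge \frac{da}{a} + \pi_Z^*\beta$, i.e. the collar model; composing with the identification of the first paragraph proves the corollary.

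I do not expect a genuine obstacle here, as every ingredient is provided by the results already established; the one point demanding care is purely bookkeeping---ensuring that the defining forms fed into the Extension Theorem are precisely the ones whose pullbacks define the cosymplectic structure on $\tilde Z$, so that the quotient reproduces $\omega$ on the nose rather than merely up to a further $b$-Moser deformation. One could alternatively apply Corollary \ref{extension} directly on the compact connected cover $\tilde Z$ and verify equivariance of the resulting model, but the explicit formula for $\tilde\omega$ makes the quotient identification most transparent.
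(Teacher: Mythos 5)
Your proposal is correct and follows essentially the same route as the paper's proof: write $\omega$ in the collar form via the Extension Theorem, pull back the normalized defining one- and two-forms to the trivializing cover $\tilde Z\times(-\epsilon,\epsilon)$, and observe that the resulting product form is exactly $(p\times\mathrm{id})^*\omega$, so $\omega$ is its quotient by the deck group. The extra remarks you add on deck-invariance and freeness of the action are harmless elaborations of what the paper leaves implicit.
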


\begin{proof}
As before let $p:\tilde{Z} \to Z$ be the finite cover. Let $v_{mod}$ be some choice of modular vector field and denote the defining one and two-forms of $Z$ fulfilling the condition in Equation \eqref{emphthe} by $\alpha$ and $\beta$ respectively. Denote by $\tilde{\alpha}, \tilde{\beta}$ the correspond two forms defined in Proposition \ref{quotientpoissonstructure}. By the extension theorem we can assume that the $b$-symplectic form on $M$ is
\begin{equation*}
\omega= \pi_{Z}^* \alpha \wedge \frac{da}{a} + \pi_{Z}^* \beta.
\end{equation*}

Let $\tilde M:= \tilde{Z}\times (-\epsilon, \epsilon)$. Then we have a finite cover $p_M: \tilde M \to M$ for $M$ given by the product map of the cover $p: \tilde Z \to Z$ and the identity on ${(-\epsilon, \epsilon)}$.
Let $\pi_{\tilde{Z}}:\tilde{M} \to \tilde{Z}$ be the projection onto the first factor. Define for $a\in(-\epsilon,\epsilon)$ the $b$-symplectic form on $\tilde M$
$$\tilde{\omega}=\pi_{\tilde{Z}}^*\tilde{\alpha}\wedge\frac{da}{a}+\pi_{\tilde{Z}}^*\tilde{\beta}.$$
Then by construction $(p_M)^* \omega = \tilde \omega$.
\end{proof}

\begin{remark}\label{rem:modularperiod}
Note that the modular period of the associated $b$-symplectic form on the $\mathbb{Z}_k$ cover is  $k$ times the modular period of the $b$-symplectic form on the base. { This can be seen as follows: let $(t,l)\in $ be a coordinate system on $\tilde{Z}=S^1\times \mathcal{L}$. Then the projection $p:\tilde{Z} \rightarrow Z$ acts on the $S^1$ factor as $p(t)=kt \textrm{ mod } 1$. Therefore, if the defining one-form on $Z$ is $\alpha=c dt$ the defining one-form on $\tilde{Z}$ is given by $\tilde{\alpha}=p^{*}(c dt)=c dkt=ck dt$ and the modular period of $\tilde{Z}$ is $ck$.}
\end{remark}

\begin{remark}\label{remark:quotientpoisson}
Similarly, any $b$-symplectic structure with defining one and two-forms $\tilde{\alpha}$ and $\tilde{\beta}$ equipped with a discrete $b$-symplectic group action gives a $b$-symplectic structure on the quotient. For such a group action there are well defined one and two-forms, $\alpha$ and $\beta$, on the base manifold defined by $p^*(\alpha)=\tilde{\alpha}$ and $p^*(\beta)=\tilde{\beta}$, where $p$ is the projection to the quotient. Then $\alpha$ and $\beta$ automatically fulfil the conditions to define a cosymplectic structure on the image of the critical hypersurface. As the group action is discrete, the quotient of the symplectic structure on leaves is likewise symplectic.
\end{remark}

\section{The $b$-symplectic slice theorem for an $S^1$-action}

First, we wish to simplify the expression of the $b$-symplectic form in the neighbourhood of an orbit. In the case that the leaf $\mathcal{L}$ is simply connected, the $b$-symplectic form has a particularly simple expression.

\begin{proposition}\label{theorem:simplyconnected}
Let $M \cong Z\times(-\epsilon,\epsilon)$ be a $b$-symplectic manifold and suppose that $Z$ is a product, $Z\cong S^1 \times \mathcal{L}$, $\mathcal{L}$ a leaf of the symplectic foliation. {Suppose furthermore that $\mathcal{L}$ is simply connected}. Then 
for a suitable defining function $f$ of $Z$ the $b$-symplectic form is given by
\begin{equation}\label{normalform:simplyconnected}
\omega = c dt\wedge\frac{d f}{f}+\pi_{\mathcal{L}}^*(\beta)
\end{equation}
where $t$ is the standard coordinate on $S^1$, $\beta$ is the symplectic form on $\mathcal{L}$ and $\pi_{\mathcal{L}}$ is the projection $ S^1\times \mathcal{L}\rightarrow \mathcal{L}$.
\end{proposition}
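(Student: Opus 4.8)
The plan is to start from the Extension Theorem (Corollary \ref{extension}), which allows me to assume the $b$-symplectic form has the semilocal shape $\omega = \pi_Z^*\alpha \wedge \frac{da}{a} + \pi_Z^*\beta$, where $\alpha$ and $\beta$ are the defining one- and two-forms of the cosymplectic structure on $Z \cong S^1 \times \mathcal{L}$, and $a$ is the collar coordinate. Using the mapping torus normalization recalled after Definition \ref{mappingtorusstructure}, I may take the defining one-form to be $\alpha = c\, dt$, where $t$ is the $S^1$-coordinate and $c$ is the modular period, so that $\omega = c\,dt\wedge\frac{da}{a} + \pi_Z^*\beta$. Since $\beta$ is closed and restricts on each leaf $\{t\}\times\mathcal{L}$ to the symplectic form, the whole difficulty is to show that, after changing the defining function $a \rightsquigarrow f$ and absorbing the correction, $\pi_Z^*\beta$ can be replaced by the pulled-back leaf form $\pi_{\mathcal{L}}^*\beta$, i.e. that the ``$t$-dependence'' and the mixed $dt$-components of $\beta$ can be removed.

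The key structural input is that $\mathcal{L}$ is simply connected. I would decompose $\beta$ on $Z = S^1\times \mathcal{L}$ as $\beta = \beta_{\mathcal{L},t} + dt\wedge\gamma_t$, where for each fixed $t$ the form $\beta_{\mathcal{L},t}$ is a two-form on $\mathcal{L}$ (the leaf symplectic form) and $\gamma_t$ is a one-form on $\mathcal{L}$. Closedness of $\beta$ forces $\frac{\partial}{\partial t}\beta_{\mathcal{L},t} = d_{\mathcal{L}}\gamma_t$ and $d_{\mathcal{L}}\beta_{\mathcal{L},t}=0$. Because all leaves carry the \emph{same} symplectic form in cohomology (indeed $[\beta_{\mathcal{L},t}]$ is locally constant in $t$, hence constant), and because $H^1(\mathcal{L})=0$ kills the primitive ambiguity, a Moser-type homotopy argument along the $t$-direction produces a leaf-preserving diffeomorphism trivializing the $t$-dependence and removing the $dt\wedge\gamma_t$ term, so that $\beta$ becomes $\pi_{\mathcal{L}}^*\beta$ for a fixed leaf form $\beta$. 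The simple-connectedness is exactly what guarantees the required primitives $\gamma_t = d_{\mathcal{L}} h_t$ exist globally and depend smoothly on $t$.

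With $\beta$ put in the product form $\pi_{\mathcal{L}}^*\beta$, I then invoke the equivariant/non-equivariant $b$-Moser Theorem (Theorem \ref{theorem:bmoser}) to absorb the remaining freedom: the defining function change $a \mapsto f$ is chosen so that $c\,dt\wedge\frac{da}{a}$ becomes $c\,dt\wedge\frac{df}{f}$ while the leaf term is unaffected, which is legitimate precisely because the two $b$-symplectic forms induce the same Poisson structure on $Z$ and the same modular vector field up to a Hamiltonian vector field (the normalization $\alpha(v_{mod})=1$, $\iota_{v_{mod}}\beta=0$ from Equation \eqref{emphthe} pins this down). The output is a $b$-symplectomorphism fixing $Z$ that brings $\omega$ into the claimed normal form \eqref{normalform:simplyconnected}.

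The main obstacle I anticipate is the middle step: carefully trivializing the $t$-dependence of the leafwise symplectic forms and simultaneously eliminating the mixed terms $dt\wedge\gamma_t$, while keeping everything $S^1$-periodic in $t$ and smooth. The Moser homotopy must be performed leafwise over the circle and checked to close up after one period; here the hypothesis $H^1(\mathcal{L})=0$ is essential both to solve for the primitives and to ensure the resulting isotopy has no holonomy obstruction around $S^1$. Once this leaf-normalization is secured, the reduction to $\frac{df}{f}$ via $b$-Moser is routine.
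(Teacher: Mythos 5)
Your strategy diverges from the paper's in a way that both misses the key idea and introduces a genuine gap. The paper's proof is essentially a one-line algebraic observation: writing $\omega = c\,dt\wedge\frac{da}{a} + dt\wedge\eta + \pi_{\mathcal{L}}^*(\beta)$, simple connectedness of $\mathcal{L}$ gives $\eta = dh$, and the new defining function $f = a e^{h}$ satisfies $\frac{df}{f} = \frac{da}{a} + dh$, so the mixed term $dt\wedge\eta$ is \emph{absorbed into the singular term} purely by changing the defining function. No diffeomorphism and no Moser argument are used, and the identity \eqref{normalform:simplyconnected} holds on the nose. In your write-up the change $a \mapsto f$ is declared to leave the leaf term "unaffected" and therefore accomplishes nothing: by the time you invoke it, your form is already (allegedly) in the desired shape with $f=a$, and the concluding appeal to the $b$-Moser theorem is redundant. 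The true role of $\pi_1(\mathcal{L})=0$ is to make $\eta$ exact so that the defining-function trick applies --- not to "kill the primitive ambiguity" inside a Moser homotopy.

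The step you substitute for this --- a leafwise Moser homotopy in the $t$-direction to remove both the $t$-dependence of $\beta_{\mathcal{L},t}$ and the term $dt\wedge\gamma_t$ --- has a real hole, which you flag but do not resolve. The natural construction takes $X_t$ with $\iota_{X_t}\beta_{\mathcal{L},t}=-\gamma_t$ and the map $(t,l)\mapsto (t,\phi_t(l))$, which does pull $\beta$ back to $\pi_{\mathcal{L}}^*(\beta_{\mathcal{L},0})$ on $\mathbb{R}\times\mathcal{L}$; but the flow satisfies $\phi_{t+1}=\phi_t\circ\phi_1$, and the monodromy $\phi_1$ is a symplectomorphism of $(\mathcal{L},\beta_{\mathcal{L},0})$ that is in general \emph{not} the identity (already when $\gamma_t=dg$ is exact and $t$-independent, $\phi_1$ is the time-one Hamiltonian flow of $g$). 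Hence the map does not descend to a diffeomorphism of $S^1\times\mathcal{L}$, and $H^1(\mathcal{L})=0$ does not repair this --- it only makes $X_t$ Hamiltonian. (Note also that $\gamma_t$ is closed only when $\partial_t\beta_{\mathcal{L},t}=0$, so the primitives $\gamma_t=d_{\mathcal{L}}h_t$ you invoke need not exist in the generality you set up.) Even if the homotopy could be closed up, your argument would only yield the normal form up to a diffeomorphism of $M$, whereas the statement (and the paper's proof) produce it by a mere change of defining function. I recommend replacing the Moser step by the direct absorption $f = a e^{h/c}$, $\frac{df}{f}=\frac{da}{a}+\frac{1}{c}dh$.
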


\begin{proof}
A $b$-symplectic form on  on $S^1 \times \mathcal{L} \times (-\epsilon,\epsilon)$ equipped with coordinates $(t,l,a)$ can be written
$$\omega=c dt\wedge\frac{da}{a}+dt \wedge \eta+\pi_{\mathcal{L}}^*(\beta)$$
where $\beta$ is the symplectic form on $\mathcal{L}$.

{When $\mathcal{L}$ is simply connected, $\eta=dh$ for some $h\in C^{\infty}(M)$.} The function $f=a e^h$ is then a defining function for $Z$  and moreover
$$\frac{df}{f}=\frac{da}{a}+dh$$
Whence we have
$$\omega=c dt\wedge\frac{df}{f}+\pi_{\mathcal{L}}^*(\beta).$$
\end{proof}

As in the symplectic slice theorem, the normal form of a $b$-symplectic form in the neighbourhood of an orbit is given by virtue of an equivariant Moser theorem. Equivariant $b$-Moser theorems for isotopic forms invariant under $S^{1}$-actions have been given in \cite{guillemin2015toric} and for more general groups in \cite{miranda2018equivariant}. As we wish to compare $b$-symplectic forms in the neighbourhood of an orbit rather than on the whole of $Z$ we require an equivariant $b$-Moser theorem of a slightly different nature:

\begin{proposition}\label{th:equivariantrelativeMoser}
Suppose that $\omega_1$ and $\omega_0$ are $b$-symplectic forms on $M$, invariant under an action of a group $G$ on $M$ which is transverse Poisson for $\omega_1$ and $\omega_0$. Denote by $\mathcal{O}_z$ the orbit of some $z\in Z$ and suppose that $\omega_1$ and $\omega_0$ coincide at $z$. Then  $\omega_1$ and $\omega_0$ are equivariantly $b$-symplectomorphic in some neighbourhood $\mathcal{U}$ of $\mathcal{O}_z$.
\end{proposition}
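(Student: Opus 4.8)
The plan is to run an equivariant, relative version of the $b$-Moser argument along the orbit $\mathcal{O}_z$. First I would record that, since $\omega_0$ and $\omega_1$ are both $b$-symplectic forms on the same $b$-manifold $(M,Z)$ and both are $G$-invariant, the hypothesis $\omega_0(z)=\omega_1(z)$ propagates along the whole orbit: by $G$-invariance the value of each $\omega_i$ at a point $g\cdot z$ of the orbit is the pushforward under $dg$ of its value at $z$, and since $\omega_0$ and $\omega_1$ coincide at $z$ while $dg$ is the same linear map for both, they coincide at every point of $\mathcal{O}_z$. I then consider the linear path of closed $b$-two-forms $\omega_t=(1-t)\omega_0+t\,\omega_1$, $t\in[0,1]$. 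Along $\mathcal{O}_z$ one has $\omega_t=\omega_0=\omega_1$, which is nondegenerate as a $b$-form; as nondegeneracy is an open condition and $\mathcal{O}_z$ is compact (the group $G$ being compact), there is a $G$-invariant neighbourhood $\mathcal{U}$ of $\mathcal{O}_z$ on which $\omega_t$ is a $b$-symplectic form with critical hypersurface $Z$ for every $t\in[0,1]$.

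Next I would set up the Moser equation. Writing $\sigma:=\omega_1-\omega_0$, a closed $b$-two-form vanishing identically along $\mathcal{O}_z$, I seek a time-dependent $b$-vector field $X_t$ whose flow $\phi_t$ satisfies $\phi_t^*\omega_t=\omega_0$; differentiating and using $d\omega_t=0$ with Cartan's formula reduces this to $d(\iota_{X_t}\omega_t)=-\sigma$, so it suffices to produce a $b$-one-form $\mu$ with $d\mu=\sigma$ and, crucially, $\mu|_{\mathcal{O}_z}=0$. This is a relative $b$-Poincaré lemma near the orbit. I would obtain it from a deformation retraction $F\colon[0,1]\times\mathcal{U}\to\mathcal{U}$ of $\mathcal{U}$ onto $\mathcal{O}_z$ chosen to preserve $Z$ and to be $G$-equivariant: writing points of $\mathcal{U}\subset Z\times(-\epsilon,\epsilon)$ as $(x,a)$, take $F_s(x,a)=(\rho_s(x),sa)$, where $\rho_s$ equivariantly retracts a $G$-invariant tubular neighbourhood of $\mathcal{O}_z$ inside $Z$ onto $\mathcal{O}_z$. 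The collar scaling fixes the singular one-form, since $F_s^*\tfrac{da}{a}=\tfrac{da}{a}$ for $s>0$, so the standard homotopy operator $\mu=\int_0^1\iota_{\partial_s}F_s^*\sigma\,ds$ produces a genuine $b$-one-form; because $\sigma$ restricts to $0$ on $\mathcal{O}_z$ the retracted end contributes nothing and $\mu$ vanishes along $\mathcal{O}_z$, and because $F$ is $G$-equivariant $\mu$ is $G$-invariant (alternatively one averages $\mu$ over $G$, which commutes with $d$ and preserves $\sigma$ and $\mathcal{O}_z$). I expect this relative $b$-Poincaré step to be the main obstacle: one must verify that the $Z$-preserving retraction keeps every object inside the $b$-category and that the homotopy integral converges to a smooth $b$-one-form despite the $\tfrac{da}{a}$ singularity as $s\to 0$. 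This is exactly where the transverse Poisson hypothesis is used, guaranteeing that $\omega_0$ and $\omega_1$ share the critical hypersurface $Z$ and that the construction can be carried out compatibly with the $G$-action.

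Finally, using that $\omega_t$ is a nondegenerate $b$-form on $\mathcal{U}$, I would define $X_t$ as the unique $b$-vector field with $\iota_{X_t}\omega_t=-\mu$; it is automatically tangent to $Z$, it is $G$-invariant since $\mu$ and $\omega_t$ are, and it vanishes along $\mathcal{O}_z$ since $\mu$ does. Integrating $X_t$ gives a flow $\phi_t$ defined for $t\in[0,1]$ on a possibly smaller $G$-invariant neighbourhood of $\mathcal{O}_z$; existence for the whole interval follows from compactness of $\mathcal{O}_z$ together with $X_t|_{\mathcal{O}_z}=0$, which also shows the orbit is fixed pointwise. The flow is $G$-equivariant because $X_t$ is $G$-invariant, and the Moser computation $\tfrac{d}{dt}\phi_t^*\omega_t=\phi_t^*\big(\mathcal{L}_{X_t}\omega_t+\sigma\big)=\phi_t^*\big(d\iota_{X_t}\omega_t+\sigma\big)=\phi_t^*\big(-d\mu+\sigma\big)=0$ yields $\phi_1^*\omega_1=\omega_0$. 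Thus $\phi_1$ is the desired equivariant $b$-symplectomorphism on a neighbourhood $\mathcal{U}$ of $\mathcal{O}_z$, fixing the orbit.
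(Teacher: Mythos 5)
Your proof is correct and follows essentially the same strategy as the paper: a relative equivariant Moser argument along $\mathcal{O}_z$, using invariance to propagate the pointwise equality from $z$ to the whole orbit, a relative Poincar\'e lemma to produce a primitive of $\omega_1-\omega_0$ vanishing on the orbit, and a $b$-vector field vanishing on $\mathcal{O}_z$ whose time-one flow is the desired equivariant $b$-symplectomorphism. The only real divergence is in the relative Poincar\'e step: the paper never integrates against the singular factor, but instead splits $\omega_0-\omega_1$ into its defining one- and two-form components, applies the classical (smooth) relative Poincar\'e lemma to $\alpha_0-\alpha_1=dg$ and $\beta_0-\beta_1=d\eta$ separately, and reassembles the $b$-primitive as $-g\,\frac{df}{f}+\eta$; you instead build a homotopy operator directly in the $b$-category via a collar-scaling retraction, which forces you to confront (and correctly resolve) the convergence of $\int_0^1\iota_{\partial_s}F_s^*\sigma\,ds$ at $s=0$. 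Both are valid; the paper's decomposition sidesteps that singular integral entirely, while your version has the minor advantage of making the equivariance of the primitive, and hence of the flow, explicit rather than appealing to an averaging remark.
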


\begin{proof}
As the defining one and two-forms associated to $\omega_1$ and $\omega_0$ are invariant under the ${S}^1$-action, it follows that on $\mathcal{O}_z$ we have $\alpha_0=\alpha_1$ and $\beta_0=\beta_1$. By the relative Poincar\'{e} lemma, in a contractible neighbourhood of $\mathcal{O}_z$ we have that $\alpha_0-\alpha_1=dg$, an exact one-form on $\mathcal{U}$ and similarly $\beta_0-\beta_1=d\eta$, an exact two-form on $\mathcal{U}$. Whence  $\omega_0-\omega_1=d(-g\frac{df}{f}+\eta)$.
Then $\omega_t=\omega_0+(1-t)\omega_1$ is  non degenerate on $\mathcal{O}_z$ and so on a neighbourhood of $\mathcal{O}_z$. We use  this to define a $b$-vector field $v_t$ by $\iota_{v_t}\omega_t=g\frac{df}{f}-\eta$. As $v_t$ is zero on $\mathcal{O}_z$, the time-one flow exists in a neighbourhood of $\mathcal{O}_z$ and gives the required $b$-symplectomorphism. As both $b$-symplectic forms are invariant under the group action, we can choose the $b$-symplectomorphism to be equivariant.
\end{proof}

\begin{theorem}\label{maintheorem}
Let $M \cong (-\epsilon,\epsilon) \times Z$ be a $b$-symplectic manifold equipped with a $b$-symplectic form $\omega$ of modular period $c$ and a transverse $b$-symplectic $S^1$-action. Let $z\in \mathcal L \subset Z$, let $\mathcal{O}_z$ be its orbit under the $S^1$-action, let $V:=T_z \mathcal L$ and let $\mathbb{Z}_l$ be the isotropy group of $z$. Then there exists an $S^1$-equivariant neighbourhood $(-\epsilon,\epsilon) \times \mathcal{U}$ of $\mathcal{O}_z$ in $M$ and an $S^1$-equivariant mapping
\begin{equation}\label{eq:bsymplectomorphism}
\phi: (-\epsilon,\epsilon) \times \mathcal{U} \rightarrow (T^*S^1\times V)/\mathbb{Z}_l
\end{equation}
where $\mathcal{O}_z$ is embedded as the zero section of the bundle $S^1\times_{\mathbb{Z}_l}\mathbb{R}\times V\cong (T^*S^1\times V)/\mathbb{Z}_l$ and where the action of $\mathbb{Z}_l$ is given by the cotangent lifted action on $T^*S^1$ and by the isotropy representation on $V$.

Moreover, if we equip the bundle $T^*S^1\times V$ with the $b$-symplectic form:
\begin{equation*}
    \tilde{\omega}_0=\omega_{c'}+\omega_V
\end{equation*}
where $\omega_{c'}$ the $b$-symplectic normal form on $T^* S^1$ as given in Definition \ref{twistedsymplectic} with modular period $c'=kc$ and $\omega_V$ the linear symplectic form on $V$, and the quotient $(T^*S^1\times V)/\mathbb{Z}_l$ with the quotient $b$-symplectic form $\omega_0$ (see Remark \ref{remark:quotientpoisson}) then the mapping becomes an equivariant $b$-symplectomorphism onto its image.
\end{theorem}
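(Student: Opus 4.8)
The plan is to reduce the statement to a model computation on a finite cover where the circle action becomes a \emph{free} translation, and then to push the resulting normal form back down through the covering. First I would invoke Proposition \ref{finitecover} and Corollary \ref{quotientbstructure} to replace $M$, near $Z$, by the $\mathbb{Z}_k$-cover $\tilde M=S^1\times\mathcal{L}\times(-\epsilon,\epsilon)$ carrying the pulled-back form $\tilde\omega=\pi_{\tilde Z}^*\tilde\alpha\wedge\frac{da}{a}+\pi_{\tilde Z}^*\tilde\beta$, on which the deck group acts by $\mu_m(t,l,a)=(t-\tfrac{m}{k},\sigma_m(l),a)$ and the lifted circle by $\tilde\rho_s(t,l,a)=(t+s,l,a)$; by Remark \ref{rem:modularperiod} the modular period upstairs is $c'=kc$. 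Fixing a lift $\tilde z=(0,\hat z,0)$ of $z$, the orbit $\tilde{\mathcal O}=S^1\times\{\hat z\}\times\{0\}$ is a single freely embedded circle. The key bookkeeping point is to identify which subgroup of the deck group I must quotient by: $\mu_m$ preserves $\tilde{\mathcal O}$ exactly when $\sigma_m(\hat z)=\hat z$, and this stabilizer is precisely the isotropy group $\mathbb{Z}_l$ of $z$, sitting inside $\mathbb{Z}_k$ as the unique subgroup $\{m=\tfrac{k}{l}j\}$ of order $l$. A small $\mathbb{Z}_k$-invariant neighbourhood of $p_M^{-1}(\mathcal O_z)$ splits into the $k/l$ disjoint translates of a neighbourhood of $\tilde{\mathcal O}$, so a neighbourhood of $\mathcal O_z$ in $M$ is canonically the quotient by this $\mathbb{Z}_l$ of a neighbourhood of $\tilde{\mathcal O}$. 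On $\tilde{\mathcal O}$ the stabilizer acts by $t\mapsto t-\tfrac{j}{l}$ together with $d\sigma_{\frac{k}{l}j}|_{\hat z}$ on the $\mathcal{L}$-directions, which is exactly the cotangent-lifted action on $T^*S^1$ times the isotropy representation on $V=T_z\mathcal{L}$; since this $\mathbb{Z}_l$ commutes with the lifted $S^1$, I genuinely have a compact $S^1\times\mathbb{Z}_l$-action. (It is worth flagging that, downstairs, $\mathcal O_z$ wraps $k/l$ times around the mapping-torus circle of $Z$, which is why the quotient carries modular period $c'/l=kc/l$ rather than $c$, with no contradiction to $Z$ having modular period $c$.)

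It then suffices to produce an $S^1\times\mathbb{Z}_l$-equivariant $b$-symplectomorphism between a neighbourhood of $\tilde{\mathcal O}$ in $(\tilde M,\tilde\omega)$ and a neighbourhood of the zero section of $(T^*S^1\times V,\tilde\omega_0)$, and to descend it. To normalise $\tilde\omega$ near $\tilde{\mathcal O}$ I would run the argument of Proposition \ref{theorem:simplyconnected} locally: on a contractible $\mathbb{Z}_l$-invariant neighbourhood of $\hat z$ in $\mathcal{L}$ the $dt\wedge\eta$-part of $\tilde\omega$ is $S^1$-invariant, hence $t$-independent and exact, and can be absorbed into an $S^1\times\mathbb{Z}_l$-invariant redefinition $f=a e^{h}$ of the defining function (averaging the primitive $h$ over $\mathbb{Z}_l$), putting $\tilde\omega$ in the form $\omega_{c'}+\pi_{\mathcal{L}}^*\beta$. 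I would then apply an equivariant Darboux theorem at the fixed point $\hat z$ of the compact symplectic $\mathbb{Z}_l$-action on $(\mathcal{L},\beta)$ to identify a neighbourhood of $\hat z$ $\mathbb{Z}_l$-equivariantly with a neighbourhood of $0$ in $(V,\omega_V)$, intertwining $\sigma$ with its linearisation. Taking the product with the identities on the $S^1$- and fibre-coordinates yields an $S^1\times\mathbb{Z}_l$-equivariant diffeomorphism $\Psi$ from a neighbourhood of $\tilde{\mathcal O}$ onto a neighbourhood of the zero section of $T^*S^1\times V$, under which $\Psi_*\tilde\omega$ coincides with $\tilde\omega_0=\omega_{c'}+\omega_V$ along $\tilde{\mathcal O}$.

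To finish upstairs I would invoke the equivariant relative $b$-Moser theorem, Proposition \ref{th:equivariantrelativeMoser}, applied to the two $S^1\times\mathbb{Z}_l$-invariant $b$-symplectic forms $\Psi_*\tilde\omega$ and $\tilde\omega_0$ on $T^*S^1\times V$: both are transverse Poisson for the translation action and coincide at $\tilde z$ (hence, by invariance, share the same defining one- and two-forms along the whole orbit), so there is an equivariant $b$-symplectomorphism $\chi$ near the zero section with $\chi^*\tilde\omega_0=\Psi_*\tilde\omega$; the isotopy may be taken $S^1\times\mathbb{Z}_l$-equivariant because the group is compact and the forms invariant, so the Moser vector field can be averaged. (In fact, since equivariant Darboux can be arranged to make the forms agree on a full neighbourhood, this last step can also be done by hand.) Composing, $\chi\circ\Psi$ is the desired equivariant $b$-symplectomorphism on the cover. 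As every map in sight is $\mathbb{Z}_l$-equivariant, it descends through the local quotient of the first paragraph to a map $\phi$ from a neighbourhood of $\mathcal O_z$ onto a neighbourhood of the zero section of $(T^*S^1\times V)/\mathbb{Z}_l$; by Remark \ref{remark:quotientpoisson} the quotient form $\omega_0$ is well defined and $\phi^*\omega_0=\omega$, while $S^1$-equivariance and the embedding of $\mathcal O_z$ as the zero section are inherited from the cover.

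The main obstacle is the simultaneous equivariance: I must carry out the defining-function change, the Darboux normalisation and the Moser isotopy all compatibly with the combined compact action $S^1\times\mathbb{Z}_l$, whose $\mathbb{Z}_l$-factor mixes a circle translation with a nontrivial symplectic leaf automorphism fixing $\hat z$. This is exactly what forces both the averaged (equivariant) versions of each step and the passage to the cover, on which the two a priori entangled ingredients of the downstairs isotropy — the translational part and the symplectic representation on $V$ — become the transparent product of a translation on $S^1$ and a linear symplectic action on $V$. A secondary but essential point, isolated above, is the correct identification of the quotient group and of the modular period ($\mathbb{Z}_l$ and $kc$, rather than $\mathbb{Z}_k$ and $c$), which is where the wrapping number $k/l$ of the orbit enters.
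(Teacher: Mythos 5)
Your proposal is correct and follows essentially the same route as the paper: pass to a trivializing finite cover, put the lifted form in the normal form of Proposition \ref{theorem:simplyconnected}, compare with $\omega_{c'}+\omega_V$ via the equivariant relative Moser theorem (Proposition \ref{th:equivariantrelativeMoser}), and descend through the $\mathbb{Z}_l$-quotient, with your bookkeeping of the stabilizer $\mathbb{Z}_l\subset\mathbb{Z}_k$ and of the modular periods $kc$ upstairs and $kc/l$ on the quotient matching the paper's. The only cosmetic differences are that the paper first applies the differentiable slice theorem to realize a neighbourhood of $\mathcal{O}_z$ as $S^1\times_{\Gamma_z}T_z\mathcal{L}$ and only then takes the $\Gamma_z$-cover of that neighbourhood (rather than the global $\mathbb{Z}_k$-cover followed by localization), and that your explicit equivariant Darboux step at $\hat z$ is absorbed in the paper into the exponential-map identification plus the pointwise agreement at $z$ required by the relative Moser argument.
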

\begin{figure}
\includegraphics[width = 9cm]{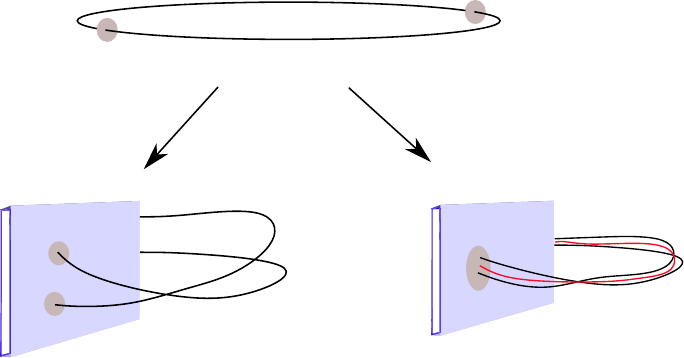}
\caption{A scheme of the trivializing finite cover with a regular orbit ($\Gamma_{{z}}=0$) in black and exceptional orbit ($\Gamma_{{z}}=\mathbb{Z}_2$) in red.}
\end{figure}

\begin{proof}
Let $z\in Z$ be a point in the critical set and $\mathcal{O}_z$ the orbit of $z$ under the $S^1$-action $\rho$. Denote by $\Gamma_{z}$ the isotropy group of $z$. Note that $\Gamma_{z}$ is automatically a subgroup of $\mathbb{Z}_k$ and so $\Gamma_{z} \cong \mathbb{Z}_l$ for some $l$. By the slice theorem there exists a  neighbourhood $\mathcal{U}$ of $\mathcal{O}_z$ in $Z$ equivariantly diffeomorphic to a neighbourhood of the zero section of the vector bundle $S^1 \times_{\Gamma_z} T_{z} Z / T_{z}\mathcal{O}_z$, where $S^1$ acts on the latter according to $s\cdot [t, v]=[t+s, v]$. By choosing the invariant Riemannian metric in the proof of the slice theorem in such a way that $T_{z} \mathcal{L}$ is orthogonal to $T_z \mathcal{O}_z$, the equivariant diffeomorphism can be expressed
$$ S^1 \times_{\Gamma_z} T_{z} \mathcal{L} \to
\mathcal{U}: [t,v] \mapsto \rho_t(\exp_z v).$$
Denote by $\psi$ the corresponding diffeomorphism on the neighbourhood $(-\epsilon,\epsilon) \times \mathcal{U}$ of $\mathcal{O}_z$ in $M$:
$$\psi:(-\epsilon,\epsilon) \times \mathcal{U} \rightarrow (-\epsilon,\epsilon) \times S^1 \times_{\Gamma_z} T_{z} \mathcal{L}.$$
Restricting the defining one and two forms of $\omega$ to $\mathcal{U}$, we have that $\mathcal{U}$ is a cosymplectic manifold with a cosymplectic $S^1$-action. The symplectic leaves of $\mathcal{U}$ are given by $\mathcal{L}_\mathcal{U}:=\mathcal{U} \cap \mathcal{L}$ and the leaf fixing subgroup as defined by Equation \eqref{discretegroup} is $\Gamma_z$.
By Proposition \ref{finitecover} there is a trivial $\Gamma_z$-cover  $\tilde{\mathcal{U}} \cong S^1 \times \mathcal{L}_\mathcal{U}$ of $\mathcal{U}$.
Then $\omega|_{(-\epsilon,\epsilon) \times \mathcal{U}}$
is the quotient of a unique $b$-symplectic form $\tilde{\omega}$ on $(-\epsilon, \epsilon) \times \tilde{\mathcal{U}}$ as given by Corollary \ref{quotientbstructure}. By Proposition \ref{theorem:simplyconnected} we may assume $\tilde{\omega}$ is of the form
$$\tilde{\omega}=c k d t \wedge \frac{d a}{a}+\pi_{\mathcal{L}_\mathcal{U}}^*\beta$$
where $a\in (-\epsilon,\epsilon)$ and $\beta$ is a symplectic two-form given on a leaf $\mathcal{U}_\mathcal{L}$.
Consider the two form $\beta_z$ on $T_{z} \mathcal{L}$. On $(-\epsilon,\epsilon)\times S^1 \times T_{z} \mathcal{L}$ define the $b$-symplectic form
$$\tilde{\omega}_0=c k d t \wedge \frac{d a}{a}+\beta_z.$$
Denote the quotient $b$-symplectic form on $((-\epsilon,\epsilon)\times S^1 \times T_{z} \mathcal{L}))/\Gamma_z$ given in Remark \ref{quotientbstructure} by $\omega_0$. Finally consider the $b$-symplectic form $\psi^* \omega_0$ on $(-\epsilon,\epsilon) \times\mathcal{U}$. This is a $b$-symplectic structure, invariant under the $S^1$-action agreeing with $\omega$ at $z$. By Theorem \ref{th:equivariantrelativeMoser} 
there is an equivariant $b$-symplectomorphism $\varphi$ between neighbourhoods of $\mathcal{O}_z$ such that $\varphi^*(\psi^*\omega_0)=\omega$. Making $\mathcal{U}$ smaller if necessary and setting $\phi=\psi\circ\varphi$ we obtain the $b$-symplectomorphism given in the statement of the theorem.
\end{proof}

\begin{remark}
 Note that the modular period of the form $\omega_0$ is $\frac{k}{l} c$ where $c$ is the modular period of the $b$-symplectic form. {This is not necessarily the modular period of the original form $\omega$ cf. remark \ref{rem:modularperiod}.}
\end{remark}

\begin{example}
Consider the following symplectic mapping torus: take as a symplectic leaf a torus $\mathbb{T}^2$ with coordinates $(\varphi,\psi)$, $\varphi, \psi \in \mathbb{R} \,\mod\, 1$ equipped with the standard symplectic form and the holonomy map given by the diffeomorphism of $\mathbb{T}^2$ which descends from the diffeomorphism of $\mathbb{R}^2$ given by $\phi\in \textup{GL}(2,\mathbb{Z})$:
$$\phi=\left( \begin{array}{ccc}
0 & -1 \\
1 & 0  \\ \end{array} \right).$$
The mapping on $\mathbb{R}^2$ corresponds to rotation by $\frac{\pi}{2}$ and so we have $\phi^4=\text{Id}$. Denote the mapping torus $Z=([0,1] \times \mathbb{T}^2)/(0,x)\sim(1,\phi(x))$.

Consider the following $b$-symplectic form on $(t,\varphi,\psi,s) \in Z\times S^1$:
$$\omega=dt \wedge\frac{ds}{\sin(s)} + \beta$$
where $\beta$ is the standard symplectic form on $\mathbb{T}^2$. Consider the action of $S^1$ on $ Z\times S^1$ given by translation in the $t$-coordinate. Then there is a neighbourhood of a regular orbit contained in $Z$ which is equivariantly diffeomorphic to a neighbourhood of the zero section $(t, \textbf{0})$ of $S^1\times \mathbb{R}^3$ where $S^1$ acts by translations on the $S^1$ factor of  $S^1\times \mathbb{R}^3$. Moreover, there exist  coordinates $(t, x, y, a)$ on  $S^1\times \mathbb{R}^3$ so that the equivariant diffeomorphism becomes a symplectomorphism where $S^1\times  \mathbb{R}^3$ is equipped with the $b$-symplectic form
\begin{equation}\label{torusexample}
\omega=4 dt\wedge\frac{da}{a} + dx \wedge dy
\end{equation}
On the critical set there is also the exceptional orbit at $\phi=\psi=0$. In a neighbourhood of the singular orbit the $b$-sympletic form is the quotient of the $b$-symplectic structure \eqref{torusexample} given above where the group action $\sigma_n\in GL(2,\mathbb{Z})$ on the vector space $(x,y)$ is given by
$$\sigma_n=\left( \begin{array}{ccc}
0 & -1 \\
1 & 0  \\ \end{array} \right)^n.$$
\end{example}

\begin{example} We can find examples from integrable systems having a naturally associated $S^1$-action model with non-trivial isotropy group.

 Take $M=\,\! T^*S^1\times \mathbb{R}^2$ endowed with coordinates $(p,t, x, y)$  and  $b$-symplectic form $\omega=\frac{1}{p}dp\wedge dt + dx\wedge dy$. Consider the $b$-integrable system on $M$ given by $F=(\log(p), xy)$. This $b$-integrable system has hyperbolic singularities. Now let $\mathbb Z/2\mathbb Z$ act on $M$ in the following way: $(-1)\cdot(p,t, x, y)= (p, t, -x', -y')$ observe that this action leaves the hyperbola $xy=\text{cnst}$ invariant and switches its branches. The action clearly preserves the $b$-integrable system and induces a new integrable system on the quotient space $M/\!\sim$. Observe that the first component of the integrable system naturally induces an $S^1$-action given by the $b$-symplectic vector field associated to the singular Hamiltonian function $\log(p)$ (named as $b$-function, see \cite{guillemin2015toric} for a discussion). This circle action also descends to the quotient and the model for the circle action has non-trivial isotropy group of order two.

 This twisted hyperbolic case in $b$-symplectic manifolds is a reminiscent of the twisted hyperbolic construction in the symplectic case in \cite{curras2003symplectic} and \cite{miranda2004equivariant}\footnote{This example shows up in physical examples and corresponds to the 1:2 resonance (see for instance the example in page 32 of the monograph \cite{efstathiou2005metamorphoses})} and it is an invitation to study the invariants of a non-degenerate singularity of a $b$-symplectic manifold. This example can be extended to higher dimensions and the action of a $\mathbb Z/2\mathbb Z$  can be considered for every hyperbolic block added as long as the corank of the singularity is equal or bigger than one. The situation can be visualized using the \emph{curled torus}, the picture below showing the structure of the set $p=0, xy=0$.
\end{example}

\begin{figure}
\includegraphics[width = 10cm]{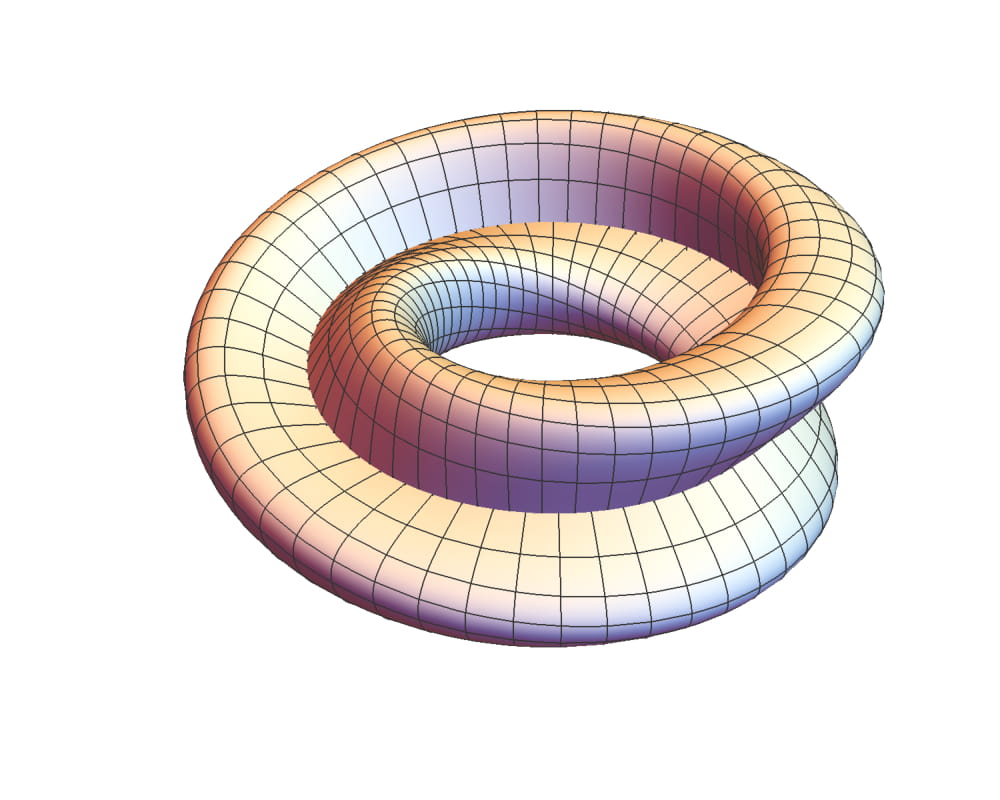}
\caption{The Curled Torus. 	Source: Konstantinos Efstathiou. }
\end{figure}

\section{Actions of compact Lie Groups on cosymplectic manifolds}

We treat the case of more general group actions on a $b$-symplectic manifold close to the critical set. First, we prove that only groups of a particular form can act on a $b$-symplectic manifold. For now we will treat group actions on a mapping torus $Z$ and then extend the results to a neighbourhood of the critical set.

In the following, we assume that the group $G$ is compact and connected and acts on a mapping torus $Z$ via a transverse, effective and foliation preserving action $\rho$. For a more general treatment of the lifting of group actions see \cite{montaldi2009symplectic}.

\begin{proposition}\label{prop:hfixesallleaves}
Suppose an element $h \in G$ fixes a leaf of the mapping torus, $\rho_h(\mathcal{L}_0)=\mathcal{L}_0$. Then $h$ fixes every leaf of the mapping torus.
\end{proposition}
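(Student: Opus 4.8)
The plan is to pass to the induced action on the leaf space and reduce the statement to the fact that a compact group of circle diffeomorphisms possessing a fixed point must act trivially. Since $\rho$ preserves the symplectic foliation $\mathcal{F}$ and $Z$ is a mapping torus over $S^1$, the leaf space $Z/\mathcal{F}$ is canonically identified with $S^1$ and the leaf projection $\pi\colon Z\to S^1$ is a submersion. Every foliation-preserving diffeomorphism therefore descends to a diffeomorphism of $S^1$, so $\rho$ induces a smooth homomorphism $\bar\rho\colon G\to \Diff(S^1)$ characterized by $\pi\circ\rho_g=\bar\rho_g\circ\pi$. Under this correspondence the hypothesis $\rho_h(\mathcal{L}_0)=\mathcal{L}_0$ is exactly the statement that $\bar\rho_h$ fixes the point $p_0:=\pi(\mathcal{L}_0)\in S^1$, and the desired conclusion ``$h$ fixes every leaf'' is exactly ``$\bar\rho_h=\text{id}_{S^1}$''. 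Thus it suffices to show that any element of $\bar\rho(G)$ with a fixed point is the identity.

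To this end I would first observe that, since $G$ is connected, $\bar\rho(G)$ lies in the identity component $\Diff^{+}(S^1)$, and since $G$ is compact, $\bar\rho(G)$ is a compact subgroup of $\Diff^{+}(S^1)$. By the classical averaging argument — average an arbitrary Riemannian metric on $S^1$ over the compact group $\bar\rho(G)$ to obtain an invariant metric, with respect to which $S^1$ is isometric to a round circle on which $\bar\rho(G)$ acts by orientation-preserving isometries — the group $\bar\rho(G)$ is conjugate in $\Diff^{+}(S^1)$ to a subgroup of the rotation group $SO(2)$.

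Finally I would invoke that a nontrivial rotation of $S^1$ has no fixed points. Writing $\bar\rho_h=\psi\circ R\circ\psi^{-1}$ with $R\in SO(2)$ and $\psi$ the conjugating diffeomorphism, the hypothesis that $\bar\rho_h$ fixes $p_0$ forces $R$ to fix $\psi^{-1}(p_0)$; a rotation with a fixed point is the identity, hence $\bar\rho_h=\text{id}_{S^1}$ and $h$ fixes every leaf. The only slightly delicate point — and the one I would treat most carefully — is the reduction of the compact subgroup $\bar\rho(G)\subset\Diff^{+}(S^1)$ to rotations; everything else is formal. I note that neither transversality nor effectiveness of $\rho$ is actually required for this particular statement: only that the action is foliation-preserving and that $G$ is compact and connected.
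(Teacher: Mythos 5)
Your proof is correct and follows essentially the same route as the paper: both pass to the induced action on the base circle of the mapping torus and reduce to the fact that a compact connected subgroup of $\operatorname{Diff}^{+}(S^1)$ is conjugate into $SO(2)$, so that an element with a fixed point must act as the identity. The only cosmetic difference is that you justify the conjugacy by averaging a metric while the paper cites Ghys, and your remark that transversality and effectiveness are not needed is consistent with the paper's argument.
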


This is an easy consequence of the fact that compact connected subgroups of $\text{Diffeo}^{+}(S^1)$ are conjugate to $\text{SO}(2)$, which is itself a consequence of $\text{Diffeo}^{+}(S^1)$ having a unique maximal compact subgroup, see \cite{ghys2001groups} for the case of orientation preserving homeomorphisms which can be adapted mutatis mutandis for the smooth case.

\begin{proof}
Let $\pi:Z \to S^1$ be the mapping torus projection. The action of the group $G$ on a symplectic mapping torus $Z$ induces an action of $G$ on the base $S^1$ in the obvious fashion
\begin{align}\label{inducedaction}
\tau:G\times S^1 &\rightarrow S^1 \\
(g,\pi(x)) &\mapsto \pi(\rho_g( x)) =: \tau_g(x)
\end{align}
As $G$ is compact and connected its image $\tau(G,\cdot)$ is a compact subgroup of $\text{Diffeo}^{+}(S^1)$, the group of orientation preserving diffeomorphisms of the circle. Whence $\tau(G,\cdot)$ is conjugate by some $w\in \text{Diffeo}^{+}(S^1)$ to $\text{SO}(2)$.
Suppose $h\in G$ fixes a leaf $\mathcal{L}_0$. This corresponds to a fixed point of the induced action $\tau_h$ on $S^1$, and so a fixed point for $w\tau_h w^{-1}\in \text{SO}(2)$. Whence $w \tau_h w^{-1}=\text{Id}_{S^1}$ and so $\tau_h=\text{Id}_{S^1}$. This corresponds to $h$ fixing all leaves of $Z$.
\end{proof}

It can be checked easily that this defines a subgroup of $G$. We  call $$H_0=\{h \in G\,|\,\rho_h(\mathcal{L}_0)=\mathcal{L}_0\}$$
the \emph{leaf preserving} subgroup of $G$.

\begin{proposition}\label{prop:H0}
Let $G$ be a group acting in a transverse and foliation preserving manner on a symplectic mapping torus. Let $H_0$ be the leaf preserving subgroup of $G$. Then \begin{enumerate}[(i)]
    \item  $H_0$ is a normal subgroup of $G$.
    \item $H_0$ is a closed Lie subgroup of $G$.
    \item The codimension of $H_0$ in $G$ is one.
\end{enumerate}
\end{proposition}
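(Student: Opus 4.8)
The plan is to recognise $H_0$ as the kernel of the homomorphism induced on the base circle; all three claims then reduce to standard Lie-theoretic facts. The induced map $\tau$ of \eqref{inducedaction} is a genuine smooth $G$-action on $S^1$: the assignment $\tau_g(\pi(x)):=\pi(\rho_g(x))$ is well defined because $\rho$ is foliation preserving (so the fibres of $\pi$, i.e.\ the leaves, are permuted among themselves), it is smooth since $\rho$ and the submersion $\pi$ are, and $\tau_{gh}=\tau_g\circ\tau_h$ is immediate. Writing $\Phi\colon G\to\text{Diffeo}^{+}(S^1)$, $g\mapsto\tau_g$, this is a group homomorphism.

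The crucial point is the identification $H_0=\ker\Phi$. Setting $\theta_0=\pi(\mathcal{L}_0)$, the defining condition $\rho_h(\mathcal{L}_0)=\mathcal{L}_0$ is precisely $\tau_h(\theta_0)=\theta_0$, so a priori $H_0$ is only the stabiliser $G_{\theta_0}$ of $\theta_0$. This is exactly where Proposition \ref{prop:hfixesallleaves} is used: if $\tau_h$ fixes the single point $\theta_0$ then $h$ fixes every leaf, i.e.\ $\tau_h=\text{Id}_{S^1}$. Hence $G_{\theta_0}=\ker\Phi$ and therefore $H_0=\ker\Phi$. Claim (i) follows at once, since the kernel of a homomorphism is normal.

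For (ii) I would note that $H_0=\{g:\tau_g(\theta_0)=\theta_0\}$ is the preimage of $\{\theta_0\}$ under the continuous evaluation map $g\mapsto\pi(\rho_g(x_0))$ (for any fixed $x_0\in\mathcal{L}_0$), hence a closed subgroup of $G$; by Cartan's closed-subgroup theorem it is an embedded Lie subgroup. For (iii) I would invoke the orbit--stabiliser theorem: as $G$ is compact the action $\tau$ is proper, so the orbit $G\cdot\theta_0$ is an embedded submanifold and the orbit map descends to a diffeomorphism $G/H_0\cong G\cdot\theta_0$. By the fact underlying Proposition \ref{prop:hfixesallleaves}, the image $\tau(G,\cdot)$ is conjugate in $\text{Diffeo}^{+}(S^1)$ to $\text{SO}(2)$, which acts transitively on $S^1$; hence $G\cdot\theta_0=S^1$ and $G/H_0\cong S^1$, so $\dim(G/H_0)=1$, i.e.\ $H_0$ has codimension one.

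The one genuinely essential ingredient --- and the step I would treat most carefully --- is that $\tau(G,\cdot)$ is conjugate to a rotation group rather than to an arbitrary compact subgroup. It is this that simultaneously (a) upgrades the stabiliser of a single point to the full kernel, via Proposition \ref{prop:hfixesallleaves}, yielding normality, and (b) forces the single orbit $G\cdot\theta_0$ to sweep out all of $S^1$, yielding codimension one. The remaining arguments --- well-definedness of $\tau$, Cartan's theorem, and orbit--stabiliser for proper actions --- are routine and require no computation.
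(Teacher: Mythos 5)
Your proof is correct and follows the same basic strategy as the paper's: pass to the induced action $\tau$ on the base circle, use the conjugacy of the image into $\text{SO}(2)$ (the content of Proposition \ref{prop:hfixesallleaves}) to identify $H_0$ with the kernel of $g\mapsto\tau_g$, and read off the three claims. The paper proves (i) by the direct computation $\tau_{ghg^{-1}}=\tau_g\tau_h\tau_g^{-1}=\text{Id}_{S^1}$ and (ii)--(iii) by viewing $H_0$ as the level set $\Phi^{-1}(\text{Id})$ of $\Phi(g)=w\tau_g w^{-1}\in\text{SO}(2)$, bounding the codimension above by $\dim\text{SO}(2)=1$ and below by transversality; your kernel-of-a-homomorphism and orbit--stabiliser packaging is equivalent, and your argument for (ii) via the preimage of a point under the continuous evaluation map $g\mapsto\pi(\rho_g(x_0))$ plus Cartan's theorem is arguably cleaner than the paper's. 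The one place to be slightly more explicit is in (iii): a compact connected subgroup of $\text{Diffeo}^{+}(S^1)$ is conjugate \emph{into} $\text{SO}(2)$, hence is either trivial or conjugate to all of $\text{SO}(2)$, and you need the transversality of the action to exclude the trivial case before concluding that $\tau(G,\cdot)$ acts transitively and $G\cdot\theta_0=S^1$ --- this is exactly the point at which the paper invokes transversality to get the lower bound on the codimension.
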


\begin{proof}

\begin{enumerate}[(i)]
\item This follows immediately from the fact that for $h\in H_0$, $g\in G$ we have $\tau_{ghg^{-1}}=\tau_g \tau_h \tau_g^{-1} = \tau_g \tau_g^{-1}=\text{Id}_{S^1}$, hence $ghg^{-1} \in H_0$.
\item Consider the projection
\begin{equation*}
\begin{split}
\Phi:G &\rightarrow \text{SO}(2)\\
\Phi(g)&=w \tau_g w^{-1}
\end{split}
\end{equation*}
corresponding to the map from $G$ to $\text{SO}(2)$ given in Proposition \ref{prop:hfixesallleaves}. It is clear that the level set $\Phi^{-1}(\text{Id})$ consists precisely of the elements of $G$ which are leaf preserving. Hence $\Phi^{-1}(\text{Id})=H_0$ is a closed subgroup of $G$.
\item The codimension of $H_0$ is at most one since it is given as the level set $\Phi^{-1}(\text{Id})=H_0$. As $G$ induces an action transverse to the foliation of $Z$ it follows that the codimension of $H_0$ is exactly one.
\end{enumerate}
\end{proof}

\begin{proposition}\label{prop:productaction}
The action of $G$ on the mapping torus $Z$ lifts to an action of a product group $\tilde{G}=S^1\times H$ on a finite trivializing cover of $Z$ where $H$ is compact and connected. Moreover, $G$ is necessarily of the form $G=(S^1 \times H)/\Gamma$ for a finite cyclic subgroup $\Gamma$ (which might be trivial).
\end{proposition}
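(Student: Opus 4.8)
The plan is to convert the short exact sequence of compact Lie groups
\[1 \to H_0 \to G \xrightarrow{\Phi} S^1 \to 1\]
produced by the preceding propositions (here $\Phi$ is the surjection onto $\text{SO}(2)\cong S^1$ of Proposition \ref{prop:hfixesallleaves}, whose kernel is the leaf-preserving subgroup $H_0$) into an isomorphism $G\cong(S^1\times H)/\Gamma$, and then to lift the action along the resulting finite covering $\tilde G=S^1\times H\to G$. The essential difficulty is that this sequence need not split and, worse, $H_0$ need not be connected; the strategy is therefore to split it only after passing to a finite cover, using the structure theory of compact connected Lie groups and keeping track of the integral lattice of the central torus.

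First I would manufacture the circle factor together with a genuinely closed, connected complement. Write $G=T_c\cdot K$ with $T_c=Z(G)^0$ the central torus and $K=[G,G]$ the closed, compact, connected, semisimple commutator subgroup, so that $T_c\cap K$ is finite. Since the target $S^1$ is abelian, $\Phi$ kills $K$, whence $\Phi(G)=\Phi(T_c)$ and $\Phi|_{T_c}\colon T_c\to S^1$ is onto. Writing $T_c=\mathfrak{t}/\Lambda$ and viewing $\Phi|_{T_c}$ as a functional $L$ with $L(\Lambda)\subset\Z$, I would pick a primitive lattice vector $v$ with $L(v)\neq 0$, set $C=\exp(\R v)\cong S^1$ (on which $\Phi$ restricts to a finite covering $C\to S^1$), and let $S_0$ be the identity component of $\ker(\Phi|_{T_c})$, a codimension-one subtorus with $T_c=C\cdot S_0$. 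Then $H:=S_0\cdot K$, being the product of two commuting compact connected subgroups, is itself compact and connected, and it lies in $\ker\Phi=H_0$, so it is leaf-preserving.

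Now the multiplication map $\mu\colon C\times H\to G$, $(c,h)\mapsto ch$, is a homomorphism because $C$ is central, and it is surjective since $G=T_cK=(C\,S_0)K=C\,H$. A dimension count, $\dim(C\times H)=\dim G$, forces $\ker\mu$ to be finite, and $\ker\mu=\{(c,c^{-1}):c\in C\cap H\}\cong C\cap H$ is a finite subgroup of the circle $C$, hence cyclic. This establishes the ``moreover'' assertion, $G\cong(S^1\times H)/\Gamma$ with $\Gamma\cong C\cap H$ finite cyclic and $H$ compact connected, and exhibits $\tilde G:=C\times H\cong S^1\times H$ as a finite cover of $G$.

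Finally I would lift the action. The central circle $C$ acts transversely to the foliation (after replacing $\pi$ by $w\circ\pi$ its induced base action is rotation, and its orbits cover the base $S^1$ via the surjection $\Phi|_C$), so Theorem \ref{sadowski} provides a finite trivializing cover $p\colon\tilde Z=S^1\times\mathcal{L}\to Z$, $p(t,l)=\rho_t(l)$, on which $C$ lifts to translation in the $S^1$-factor. Because $H\subset H_0$ preserves the leaf $\mathcal{L}$ and commutes with $C$ (centrality of $C$), I set $h\cdot(t,l)=(t,\rho_h(l))$; this commutes with the lifted $C$-action, and $p$ intertwines the resulting $\tilde G=C\times H$-action with the $G$-action on $Z$ pulled back along $\mu$. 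Thus the $G$-action lifts to a product action of $S^1\times H$ on the trivial finite cover $\tilde Z$, as claimed. The step I expect to be the main obstacle is the passage in the second paragraph from a mere Lie-algebra splitting $\mathfrak{g}=\R v\oplus\mathfrak{h}$ to an honest decomposition into a closed circle $C$ and a closed connected subgroup $H$ with finite cyclic intersection: a careless complement to $C$ could wind irrationally and fail to be closed, and it is precisely the anchoring to the lattice $\Lambda$ and to the commutator subgroup $K$ that guarantees $H$ is compact and connected and that $C\cap H$ is finite cyclic.
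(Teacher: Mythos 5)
Your proof is correct and follows the same overall strategy as the paper's: extract a circle subgroup transverse to the foliation, apply Theorem \ref{sadowski} (via Proposition \ref{finitecover}) to obtain the trivializing cover $\tilde Z \cong S^1\times\mathcal{L}$, and lift the product action $(s,h)\cdot(t,l)=(t+s,\rho_h(l))$. Where you diverge is precisely at the two points where the paper is most terse, and in both cases your version is more solid. First, the paper simply posits ``a complementary ideal $\mathfrak{k}$ of $\mathfrak{h}$ in $\mathfrak{g}$ such that $\exp(\mathfrak{k})$ is closed''; your construction of $C$ from a primitive vector of the integral lattice of the central torus $Z(G)^0$ on which $d\Phi$ is nonzero is exactly the justification this assertion needs, and your observation that the circle factor is automatically central is what makes the $C$- and $H$-lifts commute and the covering map intertwine the actions --- a point the paper uses implicitly when it writes down $\tilde\rho$ without checking it is an action covering $\rho$. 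Second, for the identification $G=(S^1\times H)/\Gamma$ the paper matches the deck action of $\Gamma'\cong\mathbb{Z}_k$ with the restriction of $\tilde\rho$ to an explicit subgroup $\Gamma=\{(-\frac{m}{k},(h')^m)\}$ and then concludes $\tilde G/\Gamma=G$ from equality of fundamental vector fields plus effectiveness; you instead read off $\Gamma$ directly as the kernel of the multiplication homomorphism $\mu\colon C\times H\to G$, which is finite by a dimension count and cyclic because it is isomorphic to the finite subgroup $C\cap H$ of the circle $C$. Your route to the group-theoretic statement is shorter and cleaner; what the paper's more laborious identification buys is the explicit description of how $\Gamma$ acts as the deck group of $\tilde Z\to Z$, namely $\mu_m(t,l)=(t-\frac{m}{k},\sigma_m(l))$ with $\sigma_1=\rho_{h'}$, which is what gets used later when the quotient models $E=\tilde E/\mathbb{Z}_l$ are assembled. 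If your argument is to feed into the proof of the main theorem, you should add the one-line observation that $\ker\mu$ acts on $\tilde Z$ precisely by these deck transformations; this follows immediately from your intertwining relation $p\circ\tilde\rho_{(c,h)}=\rho_{\mu(c,h)}\circ p$ applied to elements of $\ker\mu$.
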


\begin{proof}
Let $\mathfrak{h} \subset \mathfrak{g}$ be the Lie algebra of $H_0$, the leaf preserving subgroup of $G$ and consider a complementary ideal $\mathfrak{k}$ of $\mathfrak{h}$ in $\mathfrak{g}$. {From the construction in the proof of proposition \ref{prop:H0} we can check that the subgroup $K=\exp(\mathfrak{k})$ is closed (and indeed isomorphic to $SO(2)\cong S^1)$.}
The action of $K$ is transverse to the foliation and so by Proposition \ref{finitecover} there exists a finite trivializing cover $\tilde{Z} \cong S^1 \times \mathcal{L}$ of $Z$, such that $Z$ is the quotient of $\tilde{Z}$ by the action of the leaf fixing subgroup $\Gamma' \cong \mathbb{Z}_k$ of $K$ on $\tilde{Z}$ where $\Gamma'$ acts as
\begin{equation}\label{propproductaction:mu}
\mu_m(t,l)=(t-\frac{m}{k},\sigma_{m}(l)),\quad m\in \Z_k, (t,l)\in S^1 \times \mathcal{L}
\end{equation}
and $\sigma$ is the leaf automorphism induced by the leaf-fixing elements of $K$ on $\mathcal{L}$. Denote $\exp(\mathfrak{h}) \subset G$ by $H$. Denote by $\tilde{G}$ the group $K \times H$. Then we have an action  $\tilde{\rho}$ of $\tilde{G}$ on $\tilde{Z}$ given by
$$\tilde \rho: \tilde{G}\times \tilde{Z} \to \tilde{Z},\quad  \tilde \rho_{(s,h)}(t,l)=(t+s,\rho_h(l)).$$
Suppose $\sigma_m=\rho_h$ as an equality of maps on $\mathcal{L}$ for some $h\in H,\, m \in \Gamma' \backslash\{0\}$. As $H$ is connected, $\sigma_1=\rho_{h'}$ for some $h'\in H$. Whence, the action $\mu$ of $\Gamma'$ on $\tilde{Z}$ is equivalent to the action $\tilde{\rho}$ of $\Gamma\subset \tilde{G}$ on $\tilde{Z}$ where $\Gamma$ is the group
$$\Gamma=\left\{\Big(-\frac{m}{k}, (h')^{m}\Big)\,\Big| \, m = 0, \ldots, k-1 \right\},$$
i.e. $\mu_m = \tilde \rho_{(-\frac{m}{k},(h')^m)}$ for all $m\in \mathbb{Z}_k$
Letting $p_{\tilde{Z}}$ and $p_{\tilde{G}}$ denote the projections to $\tilde{Z}/\Gamma$ and $\tilde{G}/\Gamma$  respectively, we have a commutative diagram
\begin{align*}
\begin{diagram}
\node{\tilde{G} \times \tilde{Z}}\arrow{e,t}{\tilde{\rho}}
\arrow{s,l}{p_{\tilde{G}} \times p_{\tilde{Z}}}
\node{\tilde{Z}} \arrow{s,r}{p_{\tilde{Z}}}\\
\node{\tilde{G}/\Gamma \times Z}  \arrow{e,t}{\rho}  \node{Z}
\end{diagram}
\end{align*}
By construction, the action of $\tilde{G}/\Gamma$ on $Z$ and the action of $G$ on $Z$ possess the same fundamental vector fields. Moreover, the action of both groups is effective. Necessarily, then, $\tilde{G}/\Gamma=G$.
Conversely, assume that $\sigma_{1} \neq \rho_{h}$ for any $h \in H$.

Then $\exp(\mathfrak{k}) \cap \exp(\mathfrak{h})=0$ and so $G \cong K\times H$. The action $\tilde{\rho}$ of $\tilde{G} \cong\ G$ on $\tilde{Z}$ projects to an action of $G$ on $Z$ where the projection $p_{\tilde{G}}$ is given by quotienting the group by the subgroup $\Gamma=\Gamma'\times \{ e_{H} \}$ and the projection $p_{\tilde{Z}}$ is given by the action $\mu$ of $\Gamma\cong\Gamma'$ in \eqref{propproductaction:mu}.
\end{proof}

\begin{proposition}\label{prop:productaction2}
Let $G=S^1 \times H$ be a product group acting on a mapping torus $Z$ such that the $S^1$ factor acts transverse to the foliation. Let $z\in Z$ and denote by $G_z$ the isotropy group of $z$. Then $G_z \cong \mathbb{Z}_l \times H_{z}$ where $H_z$ the isotropy group of $z$ under the $H$-action and $\mathbb{Z}_l$ is a cyclic subgroup and $\mathbb{Z}_l\times\{e_H\}$ acts as the identity on $\mathcal{O}^H_{z}$.
\end{proposition}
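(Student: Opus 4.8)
The plan is to understand the isotropy group $G_z$ by playing the two projections of the product $G=S^1\times H$ against each other, exploiting that the $S^1$-factor is central and that (as in the decomposition produced by Proposition \ref{prop:productaction}) the factor $H$ preserves each leaf, so that $H$ acts within the leaf $\mathcal{L}$ through $z$ and $\mathcal{O}^H_z\subset\mathcal{L}$, while $S^1$ moves the leaves. First I would constrain the $S^1$-component of any element of $G_z$. Using the induced action on the base circle of the mapping torus, exactly as in the proof of Proposition \ref{prop:hfixesallleaves}, $G$ acts on $S^1$ through a homomorphism whose image is conjugate to $\text{SO}(2)$, with $H$ acting trivially and the $S^1$-factor covering $\text{SO}(2)$ with finite kernel equal to the leaf-fixing subgroup $\mathbb{Z}_k$ of Theorem \ref{sadowski}. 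If $(s,h)\in G_z$, then the induced transformation of the base fixes $\pi(z)$; since a nontrivial rotation of the circle has no fixed point, the induced transformation is the identity, forcing $s\in\mathbb{Z}_k$. Hence $G_z\subseteq\mathbb{Z}_k\times H$.

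Next I would extract the short exact sequence from the projection $q:G_z\to S^1$, $(s,h)\mapsto s$. By the previous step its image is a subgroup of the cyclic group $\mathbb{Z}_k$, hence cyclic, say $\mathbb{Z}_l$, while its kernel is $\{(0,h)\,:\,\rho_h(z)=z\}\cong H_z$. This gives
\begin{equation*}
1\longrightarrow H_z\longrightarrow G_z\xrightarrow{\,q\,}\mathbb{Z}_l\longrightarrow 1 .
\end{equation*}
Everything up to here is routine bookkeeping; the substance is to promote this extension to a direct product and to identify the action of the cyclic factor on $\mathcal{O}^H_z$.

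For the splitting I would choose $(s_0,h_0)\in G_z$ with $q(s_0,h_0)$ generating $\mathbb{Z}_l$. The key observation is that $\rho_{s_0}$ is a leaf automorphism commuting with the entire $H$-action (because $S^1$ is central in $G$), and that $(s_0,h_0)\in G_z$ reads $\rho_{s_0}(z)=\rho_{h_0^{-1}}(z)$. Comparing $H$-stabilizers, commutativity gives $H_{\rho_{s_0}(z)}=H_z$, while in general $H_{\rho_{h_0^{-1}}(z)}=h_0^{-1}H_z h_0$; hence $h_0\in N_H(H_z)$. Using this normalizing property together with the freedom $h_0\mapsto h_0 w$, $w\in H_z$, and compactness, I would arrange that the cyclic group generated by $(s_0,h_0)$ centralizes $H_z$ and meets it trivially, so that $G_z=\mathbb{Z}_l\times H_z$. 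Finally, a direct computation using centrality gives $(s_0,h_0)\cdot\rho_{h'}(z)=\rho_{h_0 h' h_0^{-1}}(z)$, i.e. the $\mathbb{Z}_l$-factor acts on $\mathcal{O}^H_z\cong H/H_z$ by conjugation by $h_0$, which for the chosen complement is trivial; this is precisely the statement that $\mathbb{Z}_l\times\{e_H\}$ acts as the identity on $\mathcal{O}^H_z$.

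The main obstacle is this last step: converting the normalizing condition $h_0\in N_H(H_z)$, which centrality of the $S^1$-factor hands us for free, into a genuine direct-product splitting whose cyclic factor simultaneously centralizes $H_z$ and acts trivially on the $H$-orbit. Steps one and two are elementary consequences of Proposition \ref{prop:hfixesallleaves} and Theorem \ref{sadowski}, but the direct-product structure must be argued carefully, leaning on the commutativity of $\rho_{s_0}$ with the $H$-action and on the cyclic nature of the quotient, rather than treated as automatic.
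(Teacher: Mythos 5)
Your first two steps coincide with the paper's proof: the paper likewise cuts the circle components of $G_z$ down to the leaf-preserving subgroup $\mathbb{Z}_k$ and identifies $\mathbb{Z}_l$ as the subgroup of elements returning $\mathcal{O}^H_z$ to itself, which is exactly the image of your projection $q$; your lift $(s_0,h_0)$ of a generator is the paper's element $\exp\bigl(\tfrac{1}{l}(\eta+\nu)\bigr)=(\tfrac{1}{l},h^{-1})$. Your observation that $h_0\in N_H(H_z)$ is correct and is a genuine addition. The gap is exactly the step you flag as the main obstacle: it is not carried out, and the mechanism you propose cannot carry it out. For $w\in H_z$, conjugation by $h_0w$ on $H_z$ differs from conjugation by $h_0$ by the inner automorphism given by $w$, so the freedom $h_0\mapsto h_0w$ can never remove a nontrivial \emph{outer} action of $h_0$ on $H_z$, and when $H_z$ is abelian it does not change the conjugation action at all. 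Moreover your closing sentence conflates two different conditions: centralizing $H_z$ is strictly weaker than what your own computation requires, namely $h_0h'h_0^{-1}\in h'H_z$ for all $h'\in H$, i.e.\ triviality of conjugation by $h_0$ on $H/H_z$. A further small point: even for a centralizing $h_0$ one only gets $(s_0,h_0)^l=(0,h_0^l)$ with $h_0^l\in H_z$, and an $l$-th root of $h_0^{-l}$ need not exist in a disconnected $H_z$, so ``meets $H_z$ trivially'' also needs an argument.

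That no choice of $h_0$ can repair this is visible in the paper's own final example: $\mathcal{L}=S^2\times S^2$ with monodromy the swap, $H=\mathrm{SO}(3)$ acting diagonally, $z=(x,-x)$. There $H_z=\mathrm{SO}(2)$ (rotations about $x$), $l=2$, and every admissible $h_0$ satisfies $h_0x=-x$, hence inverts $\mathrm{SO}(2)$ under conjugation; one computes $G_z\cong \mathrm{O}(2)$, which is not isomorphic to $\mathbb{Z}_2\times \mathrm{SO}(2)$, and the generator acts nontrivially on $\mathcal{O}^H_z\cong S^2$. So the extension can only be split as a semidirect product $\mathbb{Z}_l\ltimes H_z$ in general; the direct-product and trivial-action-on-the-orbit conclusions need extra hypotheses (e.g.\ $h_0$ central modulo $H_z$). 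To be fair to you, the paper's own proof is silent at precisely this point --- it writes down the twisted circle $K'$ through $z$ and then asserts the product decomposition --- so you have faithfully reconstructed the argument down to its weakest link; but your proposal presents the missing step as achievable by coset adjustment and compactness, and it is not.
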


\begin{proof}
Let $\mathcal{L}_0$ be a leaf of $Z$. 
Denote by $\mathcal{O}^{H}_z \subset \mathcal{L}_0$ the orbit of $z$ under the action of $(0,H)\subset G$. Denote the subgroup $(S^1, e_{H})\subset G$ by $K$. Let $\rho^{K}$ be the action of $K$ on $Z$. The  leaf preserving subgroup of $K$ can be identified with $\mathbb{Z}_k$; for $m\in \mathbb{Z}_k$, i.e. $\frac{m}{k}\in K$ is leaf-preserving, we either have $\rho^{K}_{\frac{m}{k}}(\mathcal{O}^{H}_z)\cap \mathcal{O}^{H}_z=\emptyset$ or $\rho^{K}_{\frac{m}{k}}(\mathcal{O}^{H}_z)\cap \mathcal{O}^{H}_z=\mathcal{O}^{H}_z$. Moreover elements $m\in \mathbb{Z}_k$ satisfying $\rho^{K}_{\frac{m}{k}}(\mathcal{O}^{H}_z)\cap \mathcal{O}^{H}_z=\mathcal{O}^{H}_z$ form a subgroup $\mathbb{Z}_l$ of $\mathbb{Z}_k$.

If $\rho^{K}_{\frac{m}{k}}(z) \notin \mathcal{O}^{H}_{z}$ for all $m\in \mathbb{Z}_k$ then $\mathbb{Z}_l=\{0\}$ and $G_z=\{0\} \times H_z$ where $H_z$ is the isotropy group of $z$ under the action of $(0, H)$. Alternatively suppose $\mathbb{Z}_l\neq \{0\}$, so that $\rho^{K}_{\frac{1}{l}}(z)=h \cdot z$ for some $h\in H$. If $h\neq e_{H}$, we can find a new $K' \subset G$ which acts as the identity on $z$ as follows: let $\eta$ in $\mathfrak{k}$ be such that $K=\exp(t\eta)$ where $t\in[0, 1)$. Let $\nu\in \mathfrak{h}$ be such that $\exp(\frac{1}{l}\nu)=h^{-1}$. Consider the subgroup
$$K'=\{\exp(t(\eta+\nu)) |\, t\in[0,1)\}.$$
Then the isotropy group of $z$ is of the form $\mathbb{Z}_l \times H_z$ where
$\mathbb{Z}_l \cong \{\exp\left(\frac{n}{l}(\eta+\nu)\right) |\, n=0,\ldots,l-1\}$.
\end{proof}

\section{A $b$-symplectic slice theorem}

Let $(M \cong (-\epsilon,\epsilon)\times Z, \omega)$ be a $b$-symplectic manifold together with an effective $b$-symplectic action by a compact connected Lie group $G$ acting transversely to the symplectic leaves inside the critical hypersurface $Z$.
First we will construct the $b$-symplectic models which will give us a normal form for the $b$-symplectic form about an orbit of $G$. By Proposition \ref{prop:productaction} there are two distinct cases
\begin{enumerate}
    \item $G$ is a group isomorphic to the product of Lie groups $G=S^1\times H$.
    \item $G=(S^1\times H)/\Gamma$ where $\Gamma\cong\mathbb{Z}_l\times\mathbb{Z}_k\subset S^1\times H$ and $\mathbb{Z}_k$ is a non-trivial cyclic subgroup of $H$.
\end{enumerate}

Recall from Proposition \ref{prop:productaction} that there is a trivial finite cover $\tilde{Z}= S^1\times \mathcal{L}$ of $Z$ equipped with an $(S^1 \times H)$-action which projects to the action of $G$ on $Z$.

Let $z\in \mathcal{L}_0$ be a point in a symplectic leaf of $Z$ and consider the orbit $\mathcal{O}^{H}_z$ of $z$ given by the group action of $H=\exp{\mathfrak{h}}$ on $\mathcal{L}_0$. 
Denote the isotropy group of $z$ by $H_z$. By the symplectic slice theorem (Theorem \ref{symplecticslicetheorem}), there is an $H$-equivariant neighbourhood $\mathcal{U}$ of $\mathcal{O}^{H}_z$ in $\mathcal{L}_0$ which is equivariantly symplectomorphic to a neighbourhood of the zero section of the vector bundle $Y_z^H = (H \times \mathfrak{m}^*\times V_z)/H_z$ with symplectic form $\omega_{MGS}$ as given by Theorem \ref{symplecticslicetheorem}. Recall that $\mathfrak{m}$ is a  Lie subalgebra of $\mathfrak{h}$, the vector space $V_z\subset T_z\mathcal{L}_{0}$ is the symplectic orthogonal $V_z=(T_{m} \mathcal{O}_z^{H})^{\omega} / T_{m} \mathcal{O}_z^{H}$ and $H_z$ acts on $V_z$ by the isotropy representation.

\begin{definition}[$b$-Symplectic models]\label{bsymplecticmodel}
Consider the $b$-symplectic form on $\tilde{E}=T^*S^1\times(H\times_{H_z} \mathfrak{m}^*\times V_z)$  given by
\begin{equation}\label{coverform}
\tilde \omega_0=\omega_{c'}+\omega_{MGS}
\end{equation}
where $\omega_{c'}$ is the $b$-symplectic form on $T^*S^1$ of modular period $c'=ck$  given by Definition \ref{twistedsymplectic},
and $\omega_{MGS}$ is the symplectic form on $Y_z^{H} = H\times_{H_z} \mathfrak{m}^*\times V_z$ given by the symplectic slice theorem (Theorem \ref{symplecticslicetheorem}). Consider the quotient $b$-symplectic structure on $E=\tilde{E}/\mathbb{Z}_l$ where $ \frac{m}{l}\in \mathbb{Z}_l$ acts on $T^{*}S^1$ as the cotangent lift of $\mathbb{Z}_l$ acting by translations on $S^1$ and acts on the factor $H \times_{H_z} \mathfrak{m}^*\times V_z$ equipped with the coordinates $[k, \eta, v]$ of Theorem \ref{symplecticslicetheorem} either by
\begin{enumerate}
\item  $\frac{m}{l} \cdot  [k, \eta, v]=[k, \sigma^m(\eta), \sigma^m(v)]$ for a linear symplectomorphism $\sigma$.
\item $\frac{m}{l} \cdot [k, \eta, v]=[h^m \cdot k, \eta, v]$ where $h$ is some element of $H$.
\end{enumerate}
Then $E$ has a unique $b$-symplectic structure such that the projection is a local $b$-symplectomorphism (see Remark \ref{remark:quotientpoisson}). We call these normal forms \textbf{$b$-symplectic models with symplectic slice $V_z$ and modular period $c\frac{k}{l}$}.
\end{definition}

We will now show that a neighborhood of an orbit of a $G$-action on a $b$-symplectic manifold with the properties stated in the beginning is $b$-symplectomorphic to a neighborhood of the zero section of one of the models above, completing the proof of Theorem \ref{bslicetheorem}, which we recall here in a succinct way:

\begin{theorem}
Let $G$ be a compact Lie group acting on a $b$-symplectic manifold $(M,\omega)$ transverse to the symplectic foliation { and that there is one symplectic leaf of the critical set $Z$ which is compact}. Suppose that the action of $G$ is $b$-symplectic, effective and Hamiltonian when restricted \footnote{{The group action $\rho$ induces a Lie algebra homomorphism from $\mathfrak{g}$ to the Lie algebra of vector fields on $M$ via $d_{e} \rho_{x}: \mathfrak{g} \rightarrow T_{x} M$.  In turn, by restricting the action to the critical set, we have a Lie algebra homomorphism $d_{e} \rho_{x}\vert_Z: \mathfrak{g} \rightarrow T_{x} Z \cong T_{x} S^1 \times T_{x} \mathcal{L}$ where $\mathcal{L}$ is a leaf of the foliation. The action is transverse to the leaf of the foliation if the restriction of this homomorphism to the first factor, $T_{x} S^1$, is surjective everywhere. The action is Hamiltonian when restricted to leaves if the vector fields given by the restricted homomorphism $d_{e} \rho_{x}\vert_\mathcal{L}: \mathfrak{g} \rightarrow T_{x} \mathcal{L}$ are Hamiltonian.}} to a symplectic leaf of $Z$.   Let $\mathcal{O}_z$ be an orbit of the group action contained in the critical set of $M$. Then there is a neighbourhood $\mathcal{V}$ of $\mathcal{O}_z$ in $M$ which is $b$-symplectomorphic to a neighbourhood of the zero section of a bundle given by the $b$-symplectic model $E$ in Definition \ref{bsymplecticmodel}.
\end{theorem}

\begin{proof}
By Proposition \ref{prop:productaction} there exists a finite cover $\tilde{Z}\cong S^1 \times \mathcal{L}$ which comes equipped with the action of a product group $\tilde{G}\cong S^1 \times H$ which covers the action of $G$ on $Z$. Let $z\in \mathcal{L}_0 \subset Z$ and let $\tilde{z}\in \tilde{Z}$ be a point projecting to $z$. Denote by $\mathcal{O}^{H}_{{z}}$  the orbit of {$z$} under the action of the subgroup $H$ and by $\mathcal{O}^{\tilde{G}}_{\tilde{z}}$ the orbit of $\tilde{z}$ under the action of $\tilde{G} \cong S^{1} \times H$ on the cover $\tilde{Z}$. Then an invariant open neighbourhood of $\mathcal{O}^{\tilde{G}}_{\tilde{z}}$ in $\tilde M \cong (-\epsilon,\epsilon) \times \tilde Z$ is of the form $\tilde{\mathcal{V}}=(-\epsilon,\epsilon) \times S^1  \times \mathcal{U}$ where $\mathcal{U}$ is an invariant open neighbourhood of $\mathcal{O}^{H}_{{z}}$ in $\mathcal{L}_0$. Recall that $Z$ is the quotient of $\tilde{Z}$ by a cyclic subgroup $\Gamma$ of $\tilde{G}$. By Proposition \ref{prop:productaction}, we may assume that $\Gamma$ is of the form
\begin{equation}\label{Gamma}
\Gamma=\left\{\left(-\frac{m}{k}, h^{m}\right) \big| \,m = 0, \ldots, k-1 \right\}.
\end{equation}
Let $\tilde{\omega}$ be the lift of $\omega$ to $\tilde{Z}$ as given by Proposition \ref{finitecover}. By Theorem \ref{theorem:simplyconnected} we may assume that locally around $\tilde z$, $\tilde{\omega}$  is of the form
$$\tilde{\omega}=ck d t \wedge \frac{d a}{a}+\beta$$
where $\beta$ is the symplectic form on the leaf $\mathcal{L}_0$. Denote by $H_{{z}}$ the isotropy group of ${z}$ under the action of $H$.
By the symplectic slice theorem, Theorem \ref{symplecticslicetheorem}, a neighbourhood $\mathcal{U}$  of $\mathcal{O}^{H}_{z}$ with symplectic form $\beta$ is equivariantly symplectomorphic to a neighbourhood of the zero section of the bundle $Y^{H}_{z}=H \times_{H_{z}} \mathfrak{m}^* \times V_{z}$ with symplectic form given by the symplectic slice theorem.

Consider the vector bundle $\tilde{E}=T^{*} S^{1} \times\left(H \times_{H_{z}} \mathfrak{m}^{*} \times V_{z}\right)$ with symplectic form given by $\tilde{\omega}_0$ in Equation \eqref{coverform}, where $c$ is the modular period of $\omega$ and $k$ is the order of $\Gamma$. Let $\tilde{\psi}$ be the equivariant diffeomorphism form $\tilde{\mathcal{V}}$ to a neighborhood of the zero section in $\tilde E$ obtained from the slice theorem on $\mathcal{L}_0$. Then $\tilde{\psi}^{*} \tilde{\omega}_0$ is a $b$-symplectic form on a neighbourhood of $\mathcal{O}^{\tilde{G}}_{\tilde{z}}$ and, since $(\tilde{\psi}^{*}\tilde{\omega}_0)_{\tilde{z}}=\tilde{\omega}_{\tilde{z}}$, by the equivariant relative Moser Theorem, Theorem \ref{th:equivariantrelativeMoser}, and after making $ \mathcal{\tilde V}$ smaller if necessary, we can conclude that there is an equivariant $b$-symplectomorphism from $\mathcal{\tilde V}$ to a neighbourhood of the zero section of $\tilde{E}$ equipped with the $b$-symplectic form $\tilde{\omega}_0$.

Denote by $\Gamma_z$ the subgroup of $\Gamma$ defined by $\{m \in\Gamma\,|\,\rho_m(\mathcal{O}^H_z) \cap \mathcal{O}^H_z=\mathcal{O}^H_z\}$, where $\rho$ is the action of $\Gamma_z \subset \tilde{G}$ on $Z$ equivariant with respect to the projection $\tilde Z \to Z$, as in Proposition \ref{prop:productaction}.

Then $\Gamma_z$ is a cyclic subgroup $\Gamma_z \cong \mathbb{Z}_l$ of $\Gamma$ of the form
$$\Gamma_z=\left\{\left(-\frac{m}{k}, (h')^m\right) \big|\, m = 0, \ldots, l-1 \right\}$$
for some $h' \in H$. Denote by $p_{\tilde{\mathcal{V}}}$, $p_{\tilde{E}}$ the projections to the quotients of $\tilde{\mathcal{V}}$ and $\tilde{E}$ by the action of $\Gamma_z$ respectively. Define $\psi$ by the condition that the following diagram commutes:

\begin{align*}
\begin{diagram}
\node{\tilde{\mathcal{V}}}\arrow{e,t}{\tilde{\psi}}
\arrow{s,l}{p_{\tilde{\mathcal{V}}}}
\node{\tilde{E}} \arrow{s,r}{p_{\tilde{E}}}\\
\node{\tilde{\mathcal{V}}/\Gamma}  \arrow{e,t}{\psi}  \node{E}
\end{diagram}
\end{align*}

First consider the case where $G\cong S^1\times H$. We may assume by Proposition \ref{prop:productaction2} that the action of $\Gamma_z$ on the orbit $\mathcal{O}^{H}_z$ and so on the base of the bundle $Y^{H}_{z}$ is trivial. Moreover it preserves the slice $V_z$ and acts by linear symplectomorphisms and so $\psi$ is $b$-symplectomorphism to Model (1) of Definition \ref{bsymplecticmodel}.

For $h\neq e_H$ in the group $\Gamma$ in Equation \ref{Gamma} (that is, the case $G\cong (S^1 \times H)/\Gamma$ for $\Gamma$ non trivial and $\Gamma_z\subset\Gamma$), the the action of $\Gamma_z$ on $Y^{H}_{z}$ is given by the symplectic slice theorem, Theorem \ref{symplecticslicetheorem}, and the equivariant normal form is given by Model (2) of Definition \ref{bsymplecticmodel}.
\end{proof}

{ Some remarks are in order: }\begin{remark}  {It would be possible to extend this slice theorem to proper group actions rather than compact group actions as done in \cite{ortega2002symplectic}.}

\end{remark}

\begin{remark} {This slice theorem puts a first step forwards towards understanding rigidity phenomena in the Poisson realm. In \cite{mmz} a rigidity theorem for Hamiltonian actions on Poisson manifolds is given. Our slice theorem yields a similar rigidity theorem also for $b$-symplectic actions. The general rigidity theorem in \cite{mmz} uses sophisticated Nash-Moser techniques which are not necessary in the case of $b$-symplectic manifolds. In a more general context, this is connected to the problem of stability of symplectic leaves (the normal form obtained gives an equivariant version of this phenomenon). For the general problem of stability of symplectic leaves and rigidity and flexibility phenomena confer to \cite{stab} and \cite{rui2}.}

\end{remark}

\begin{example}
Let $G$ be a compact Lie group with non-trivial centre. Let $\xi_{1} \in \mathfrak{g}$ be a central element of the Lie algebra and $\xi_{2}, \ldots, \xi_{n}\in \mathfrak{g}$ be such that $\xi_{1}, \ldots, \xi_{n}$ form a basis of the Lie algebra. Denote by $\eta_{i}$ the basis of the Lie algebra dual such that $\left(\eta_{i}, \xi_{j}\right)=\delta_{i j} .$ Denote the associated invariant vector fields $L_{g *} \xi_{i}$ by $v_{i}$ and $L_{g}^{*} \eta_{i}$ by $m_{i}$ respectively. At each point $g \in G$ these give a basis for the tangent and cotangent spaces at $g$. Consider the singular 2 -form on $T^{*} G$
\begin{equation}
\omega=\pi^*m_{1} \wedge \frac{d\left(\lambda\left(v_{1}\right)\right)}{\lambda\left(v_{1}\right)}+\sum_{i=2}^{n} \pi^*m_{i} \wedge d\left(\lambda\left(v_{i}\right)\right)
\end{equation}
where $\pi$ the canonical projection $T^{*} G \rightarrow G$. It can be checked directly that $\omega$ is a $b$-symplectic form on $T^*G$ invariant under the cotangent lifted action of $G$ on $T^*G$. By Proposition \ref{prop:productaction}, $G$ has a finite cover of the form $S^1\times H$. The $b$-symplectic model for the action of $G$ on $T^{*} G$ is given by $\tilde{E}=(T^{*} S^{1} \times T^*H)/\mathbb{Z}_k$ where $\mathbb{Z}_k$ acts diagonally on $T^{*} S^{1} \times T^*H$ by the cotangent lift of translations on $S^1$ and the $b$-symplectic form on $\tilde E$ is
\begin{equation}
\tilde \omega_0 = \omega_{c}+\omega_{H}
\end{equation}
where
\begin{itemize}
\item $\omega_{c}$ is the standard $b$-symplectic form of modular period $c$ on the manifold $T^*S^1$, as given in Definition \ref{twistedsymplectic}.
\item $\omega_H$ is the canonical symplectic form on $T^{*}H$.
\end{itemize}
\end{example}

\begin{example}
Consider the symplectic mapping torus
\begin{equation}
Z=\frac{[0, 1] \times \mathcal{L}}{(0,l) \sim(1,\phi(l))}
\end{equation}
where
\begin{itemize}
    \item $\mathcal{L}\cong S^2\times S^2$, where $S^{2}$ is the two sphere equipped with the standard symplectic form and $\mathcal{L}$ is equipped with the product symplectic form.
    \item $\phi: \mathcal{L}\rightarrow \mathcal{L}$ is the diffeomorphism of $\mathcal{L}$ given by exchanging the $S^2$ factors of $\mathcal{L}$, i.e., $\phi(x,y)=(y,x)$.
\end{itemize}
Consider the group $G = S^1\times \text{SO}(3)$ where $\text{SO}(3)$ acts diagonally on the product $\mathcal{L}\cong S^2\times S^2$ by rotation on each factor and $S^1$ acts by translations on the factor $[0,1]$ of the above mapping torus:
$$(s,A) \cdot (t,x,y) = (t+ 2s, A \cdot x, A \cdot y), \qquad (s,A) \in S^1 \times \text{SO}(3),\, (t,x,y) \in [0,1]\times \mathcal{L}. $$
Consider a point $z = (0, x,y) \in Z$ and the corresponding orbit $\mathcal{O}_z$ in the $b$-symplectic manifold $M = (-\epsilon,\epsilon)\times Z$. We distinguish three cases:

First, suppose $x \neq \pm y$. Then the action of $S^1 \times \text{SO}(3)$ on the orbit $\mathcal{O}_z$ is free. There is a neighbourhood a neighbourhood $\mathcal{V}$ of $\mathcal{O}_z$ equivariantly $b$-symplectomorphic to the zero section of the bundle $E=T^{*} \mathrm{S}^{1} \times Y_{z}^{\text{SO}(3)}$, where $E$ is equipped with the $b$-symplectic form
\begin{equation*}
\tilde \omega_0 =\omega_{2}+ \omega_{M G S}
\end{equation*}
and $\omega_{2}$ is the standard symplectic form  of modular period 2 on $T^* S^1$ and $\omega_{M G S}$ is a symplectic form on $Y_{z}^{\text{SO}(3)}$ given by Theorem \ref{symplecticslicetheorem}.

Now let  $x=y$. Then $z$ has isotropy group $\mathbb{Z}_2\times \text{SO}(2)$. The associated $b$-symplectic model is given by $E=T^{*} \mathrm{S}^{1} \times F$, where $F=\text{SO}(3) \times_{\text{SO}(2)} V$ is a bundle over the homogeneous space $\text{SO}(3)/\text{SO}(2)\cong S^2$, $V$ a $2$-dimensional vector space with Darboux symplectic form $\omega_V$. The $b$-symplectic form on $E$ is given by
$$\tilde \omega_0=\omega_{1}+2\omega_{S^2}+\omega_V$$
where $\omega_{1}$ is the standard symplectic form of modular period 1 on $T^* S^1$  and $\omega_{S^2}$ is the usual symplectic form on the sphere.

Finally, suppose $y=-x$. Let $\nu\in \mathfrak{k}$, $\mathfrak{k}$ the Lie algebra of $S^1$. Let $\exp(t\xi) \cong \text{SO}(2)$ be the $1$-parameter subgroup of $SO(3)$ such that $g=exp(\xi)$ acts on $S^2$ by $g(x)=-x$ and $d\rho_g=-Id$. Then the subgroup $(exp(t\nu),exp(t\xi)) \hookrightarrow S^1\times SO(3)$ acts as the identity on the orbit $\mathcal{O}_z\cong S^2$ and the $b$-symplectic model is given by the quotient bundle $E=T^{*} \mathrm{S}^{1} \times (SO(3) \times_{SO(2)} V)/\mathbb{Z}_2$ where $\mathbb{Z}_2$ acts on $u,v \in V$ by $(u,v)\rightarrow (-u,-v)$ and $E$ is equipped with the $b$-symplectic form
\begin{equation*}
\tilde \omega_0=\omega_{1}+2\omega_{S^2}+\omega_V.
\end{equation*}
\end{example}

\end{document}